\documentclass[10pt,pra,letterpaper,superscriptaddress,twocolumn,showpacs, floatfix]{revtex4-2}
\pdfoutput=1

\usepackage[utf8]{inputenc}
\usepackage[T1]{fontenc}
\usepackage{amsmath, amssymb}
\usepackage{graphicx}
\usepackage{xcolor}
\usepackage{hyperref}
\hypersetup{pdfencoding=auto,psdextra=true}
\usepackage[linesnumbered, ruled, vlined]{algorithm2e}
\usepackage[compatibility=false]{caption}
\usepackage{subcaption}
\usepackage{ragged2e}
\usepackage{placeins}
\usepackage{tikz}
\usetikzlibrary{decorations.pathreplacing}
\usepackage{amsthm}
\newtheorem{observation}{Observation}
\newtheorem{theorem}{Theorem}
\newtheorem{lemma}{Lemma}
\newtheorem{corollary}{Corollary}
\newcommand{\mat}[1]{\mathsf{#1}}

\begin{document}
\title{Strongly Regular Graph Construction via Quadratic Binary Optimization}

\author{D. Osorio}
\affiliation{Instituto de F\'isica, Pontificia Universidad Cat\'olica de Chile, Casilla 306, Santiago, Chile}

\author{P. Andagana}
\affiliation{Instituto de F\'isica, Pontificia Universidad Cat\'olica de Chile, Casilla 306, Santiago, Chile}

\author{V. González}
\affiliation{Departamento de Física, Universidad de Antofagasta, Chile}

\author{J. Czartowski}
\affiliation{Jagiellonian University, Marian Smoluchowski Institute of Physics, Krak\'ow, Poland}
\affiliation{School of Physics, Trinity College Dublin, Dublin, Ireland}

\author{D. Goyeneche}
\affiliation{Instituto de F\'isica, Pontificia Universidad Cat\'olica de Chile, Casilla 306, Santiago, Chile}

\begin{abstract}
Strongly regular graphs, hereafter SRGs, are highly structured combinatorial
objects of central interest in algebraic graph theory and quantum information,
whose explicit construction becomes rapidly intractable as the number of
vertices grows; Conway's still unresolved $99$-graph problem is an emblematic
example. The matrix equation defining SRGs is quadratic in the binary edge
variables, so the natural penalty enforcing it, a sum of squared residuals, is
quartic and lies outside quadratic unconstrained binary optimization, known as
QUBO. We prove a considerably stronger obstruction, namely that no quadratic
function of the edge variables has the labeled SRGs as its exact set of global
minimizers, except for three degenerate families given by the perfect matchings,
the cocktail-party graphs and the pentagon. In particular, we formally establish
that the $99$-vertex graph of Conway's problem cannot be obtained from any QUBO
over the edge variables without auxiliary variables. Guided by this theorem we
develop two complementary algorithms. The first operates within pure QUBO,
solves the degenerate families exactly, and beyond them works as a
generate--prune--verify pipeline whose output is a reduced set of candidates
certified by a polynomial-time classical test. The second solves the full
problem by quadratizing this quartic penalty through Rosenberg auxiliary
variables, whose number grows as $\Theta(N^3)$ with the number of vertices
$N$. We test the pure-QUBO route under simulated annealing on all feasible
families with at most $18$ vertices and the auxiliary-variable route on
representative targets up to $16$ vertices, and tabulate the algebraically
feasible parameter sets for graphs with at most $49$ vertices.
\end{abstract}

\keywords{QUBO, Strongly Regular Graphs, Combinatorial Optimization, Simulated Annealing, Symmetry Breaking, Max-Clique}

\maketitle

% --- 1. Introducción ---
\section{Introduction}
\label{sec:intro}

Combinatorial optimization underlies a wide class of problems in mathematics, physics, and computer science, where optimal configurations must be selected from exponentially large discrete spaces \cite{handbook_combinatorial,modern_graph_theory}. A particularly rich and challenging class of such problems concerns the explicit construction of graphs with prescribed structural properties, since the constraints are typically nonlocal and induce highly non-convex search landscapes that scale rapidly with system size.

A graph with $N$ vertices is said to be $k$-regular if every vertex has degree $k$ \cite{west2001}; for fixed $k$, the number of non-isomorphic $k$-regular graphs grows superexponentially with $N$ \cite{mckay_wormald_1991}. Strongly regular graphs, hereafter SRGs, constitute a far more constrained family within this class. A graph is strongly regular with parameters $(N,k,\lambda,\mu)$ if it is $k$-regular, every pair of adjacent vertices has exactly $\lambda$ common neighbors, and every pair of non-adjacent vertices has exactly $\mu$ common neighbors \cite{brouwer_haemers_spectra}. All these conditions are captured by a single quadratic equation for the adjacency matrix $A$,
\begin{equation}\label{eq:srg-cert}
    A^2 \;=\; (\lambda-\mu)\,A \,+\, (k-\mu)\,\mat{I} \,+\, \mu\,\mat{J},
\end{equation}
together with $A_{ii}=0$ and $A\mathbf{1}=k\mathbf{1}$, where $\mat{I}$ is the identity matrix, $\mat{J}$ the all-ones matrix, and $\mathbf{1}$ the all-ones vector. The rigid spectral structure implied by Eq.~\eqref{eq:srg-cert}, exactly three distinct eigenvalues, makes SRGs natural objects in the study of adjacency matrices as Hamiltonians of discrete systems \cite{adjacency_hamiltonian} and in algebraic graph theory \cite{godsil_royle_algebraic}. They also appear in finite geometry and quantum information theory, in particular as the combinatorial substrate of mutually unbiased bases obtained from Paley graphs over prime power fields \cite{Bandyopadhyay2001} and of equiangular line sets in low-dimensional Hilbert spaces \cite{LEMMENS1973494}, where the spectrum of the SRG directly controls the inner-product structure of the associated quantum states. Deciding whether an SRG exists for given feasible parameters can be a profound problem. An emblematic case is Conway's $99$-graph problem \cite{conway2017five}, one of the five \$1000-prize problems posed by J.~H.~Conway, which asks whether an $\mathrm{SRG}(99,14,1,2)$ exists; the question remains open and has recently been attacked with SAT solvers \cite{keramatipour2026sat}.

Quadratic unconstrained binary optimization, known as QUBO, provides a unifying framework for combinatorial optimization in which a quadratic polynomial over binary variables is minimized \cite{qubo_review}. A key attraction of the framework is that a QUBO instance is solver-agnostic. The same instance can be attacked by classical simulated annealing \cite{simulated_annealing}, by quantum annealing \cite{quantum_annealing}, and, increasingly relevant in practice, by a new generation of high-performance classical and quantum-inspired QUBO engines such as VeloxQ \cite{pawlowski2025veloxq}, which routinely handle instances with millions of binary variables, as well as by tensor-network methods that simulate quantum optimization hardware at scale \cite{kshetrimayum2026tn}. Casting a combinatorial construction problem in QUBO form therefore plugs it into a rapidly advancing ecosystem of optimization technology, within which quantum annealers are one option among several rather than a prerequisite.

Is the SRG construction problem a QUBO? Encoding a graph by binary edge variables, the entries of $A^2$ are quadratic in these variables, so the natural penalty enforcing Eq.~\eqref{eq:srg-cert}, a sum of squared residuals, is a polynomial of degree four. The defining equation itself therefore cannot be minimized within QUBO; in the language of pseudo-Boolean optimization it yields a higher-order unconstrained binary optimization problem, HUBO for short \cite{boros_hammer2002,ishikawa2011}. This observation, however, does not close the question. Nothing forbids, \textit{a priori}, the existence of some other quadratic function, possibly far removed from the defining equation, whose global minimizers are exactly the desired SRG adjacency matrices; finding one would put the full problem within reach of every QUBO solver at no overhead. One of the central results of this work is that this possibility can be ruled out completely. We prove that a quadratic function of the edge variables whose global minimizers are exactly the labeled SRGs exists only for three degenerate families, namely the perfect matchings, the cocktail-party graphs and the pentagon, and that for them the elementary regularity penalty already does the job. For every other feasible parameter set, including the $\mathrm{SRG}(99,14,1,2)$ of Conway's problem, auxiliary variables are unavoidable. The proof rests on a short symmetry argument, an averaging over the vertex-relabeling group which shows that any candidate quadratic can be symmetrized, while symmetric quadratics can only measure degree statistics. Section~\ref{sec:noqubo} presents the result with its short proof, and Appendix~\ref{app:no-aux} develops the complete study, including the failure of invariant cubic penalties and an exhaustive analysis of symmetry-broken frames.

Guided by this theorem, we develop two complementary QUBO-based algorithms which, to the best of our knowledge, constitute the first QUBO treatments of SRG construction; previous QUBO encodings of graph-theoretic tasks have focused on maximum clique \cite{lucas2014,chapuis2019}, Ramsey numbers \cite{bian2014}, and related vertex-selection problems, none of which enforce the simultaneous constant-$\lambda$ and constant-$\mu$ conditions that define SRGs. The first algorithm, developed in Sec.~\ref{sec:pipeline}, stays within pure QUBO over the edge variables. On the three degenerate families it is exact. Beyond them, it operates as a generate--prune--verify pipeline in which a Stage~I QUBO generates $k$-regular candidates under a symmetry-breaking prescription, a Stage~II QUBO postselects them through a Max-Clique instance guided by the Hoffman bound, and a final classical $O(N^3)$ test certifies the survivors against Eq.~\eqref{eq:srg-cert}. Its output is in general a reduced set of candidates rather than the solution itself, and the number of candidates to be certified grows with $N$; this is the structural price of avoiding auxiliary variables. The second algorithm, developed in Sec.~\ref{sec:full-qubo}, solves the full problem by quadratizing the quartic penalty through Rosenberg auxiliary variables \cite{rosenberg1975,boros_hammer2002,anthony2017,dattani2019}, so that the global minimizers of a single QUBO are in one-to-one correspondence with the adjacency matrices of the target SRGs. The price is the number of auxiliary variables, which grows as $\Theta(N^3)$ and eventually becomes the limiting factor for sufficiently large graphs. Each route thus pushes the intrinsic hardness of the problem into a different resource, postprocessing volume in the first case and variable count in the second, and their practical domains of applicability turn out to be complementary.

The two routes plug SRG construction into the QUBO solver ecosystem with concrete long-range targets. For Conway's problem the pipeline route requires $\binom{99}{2}=4851$ edge variables, while the complete formulation requires roughly $4.7\times 10^{5}$ binary variables after quadratization, beyond the connectivity-limited capacity of present-day quantum annealers yet within the operating range already demonstrated by state-of-the-art classical QUBO software \cite{pawlowski2025veloxq}. SRG construction thus offers a graded family of benchmark instances, from the Petersen graph to Conway's problem, on which classical, quantum-inspired and quantum QUBO solvers can be systematically compared.

The remainder of the paper is organized as follows. Section~\ref{sec:noqubo} formalizes the obstruction and states the impossibility theorem. Section~\ref{sec:pipeline} develops the pure-QUBO algorithm, exact on the degenerate families and of generate--prune--verify type beyond them, including symmetry breaking, Max-Clique postselection and algebraic certification. Section~\ref{sec:full-qubo} presents the auxiliary-variable algorithm, its variable counting and its empirical convergence. Section~\ref{sec:discussion} closes with a discussion and conclusions. Appendix~\ref{app:no-aux} contains the proofs behind Sec.~\ref{sec:noqubo} together with an exhaustive study of symmetry-broken frames, Appendix~\ref{app:srg-reduction} derives the canonical labelings that reduce the number of free binary parameters in both algorithms, and Appendix~\ref{app:feasibility} reviews the algebraic feasibility conditions on SRG parameters and tabulates the feasible cases for $N\le 49$.

\section{The SRG problem is not an auxiliary free QUBO}
\label{sec:noqubo}
This section establishes the structural fact on which the whole paper is organized. We first show that the defining equation leads to a quartic energy, and then prove that no quadratic energy whatsoever can replace it, apart from three degenerate families. The reader willing to accept these statements at face value may skim the present section and keep only Theorem~\ref{thm:no-aux} and Corollary~\ref{cor:no-aux-class}; complete proofs and extensions are collected in Appendix~\ref{app:no-aux}.

\subsection{From the defining equation to a quartic energy}
\label{subsec:quartic}

We encode an undirected simple graph $G=(V,E)$ with $N$ vertices through binary variables $x_{i,j}$ defined for $0 \le i < j \le N-1$, where $x_{i,j}=1$ if vertices $i$ and $j$ are connected and $0$ otherwise, with the symmetric convention $x_{a,b}\equiv x_{b,a}$ for $a\neq b$. The $k$-regularity condition requires
\begin{equation}\label{eq:degree-constraint}
\sum_{\substack{j=0 \\ j \neq i}}^{N-1} x_{i,j} = k, \quad \forall i \in \{0,\dots,N-1\},
\end{equation}
and admits an exactly quadratic penalty,
\begin{equation}\label{eq:Hgen}
H_{\text{gen}} = \sum_{i=0}^{N-1} \left( \sum_{\substack{j=0 \\ j \neq i}}^{N-1} x_{i,j} - k \right)^2,
\end{equation}
whose global minimizers are precisely the $k$-regular graphs. A positive penalty multiplier could be attached to $H_{\text{gen}}$, but it plays no role in what follows and we set it to unity throughout.

The remaining SRG conditions count common neighbors. For any two distinct vertices $i$ and $j$, the number of common neighbors on a candidate adjacency is
\begin{equation}\label{eq:common-neighbors}
c_{i,j} \;=\; \sum_{\substack{\ell=0 \\ \ell \neq i, j}}^{N-1} x_{i,\ell}\, x_{j,\ell},
\end{equation}
which is quadratic in the edge variables. The SRG conditions demand $c_{i,j}=\lambda$ whenever $i$ and $j$ are adjacent and $c_{i,j}=\mu$ otherwise, which can be written compactly as the linear relation
\begin{equation}\label{eq:srg-condition}
c_{i,j} \;=\; \mu \;+\; (\lambda - \mu)\, x_{i,j}, \qquad 0 \le i < j \le N-1,
\end{equation}
the entrywise form of Eq.~\eqref{eq:srg-cert}. Enforcing Eq.~\eqref{eq:srg-condition} through a standard squared penalty introduces the term
\begin{equation}\label{eq:Hlm}
H_{\lambda\mu} \;=\; \sum_{0\le i < j \le N-1} \Big(\, c_{i,j} \;-\; \mu \;-\; (\lambda-\mu)\, x_{i,j} \,\Big)^{\!2}.
\end{equation}
Expanding the square produces monomials of the form $x_{i,\ell}x_{j,\ell}x_{i,\ell'}x_{j,\ell'}$, that is, products of up to four distinct binary variables. The energy $H_{\lambda\mu}$ is therefore a pseudo-Boolean function of degree four and, together with $H_{\text{gen}}$, defines a HUBO rather than a QUBO problem \cite{lucas2014,boros_hammer2002,dattani2019}.

\subsection{No quadratic energy can replace it}
\label{subsec:noquad}

The quarticity of $H_{\lambda\mu}$ does not by itself settle the matter, because a QUBO formulation of a problem need not follow the defining constraints literally. What a QUBO solver returns is a global minimizer of its cost function, so all that is required is some quadratic function whose global minimizers coincide with the desired solutions. We now show that such a function does not exist beyond a short list of degenerate families.

For a real function $f$ on $\{0,1\}^m$, with $m=\binom{N}{2}$ the number of edge variables, we denote by
\begin{equation}\label{eq:minimizers}
\mathcal{M}(f) \;=\; \Bigl\{ x^\star \in \{0,1\}^m : f(x^\star) \le f(x) \ \ \forall\, x \in \{0,1\}^m \Bigr\}
\end{equation}
the set of its global minimizers, comprising all configurations that attain the minimum value of $f$ over the binary cube. When $f$ is a QUBO cost function, $\mathcal{M}(f)$ is the set of optimal solutions of the instance, the manifold of minimum-energy configurations of the associated classical spin Hamiltonian. Let $S \subset \{0,1\}^m$ collect the labeled adjacency matrices of the SRGs with parameters $(N,k,\lambda,\mu)$, comprising all vertex labelings of all isomorphism classes, and let $R \supseteq S$ collect all labeled $k$-regular graphs on $N$ vertices. We wonder whether some polynomial $f$ of degree at most two in the edge variables, possibly unrelated to the defining equation, satisfies $\mathcal{M}(f)=S$. Demanding the full set $S$, rather than a nonempty subset of it, is the meaningful requirement. A construction based on the parameters alone has no means to single out particular solutions, and the apparent shortcut of encoding one known SRG through its Hamming-distance function presupposes the very solution one is after; Appendix~\ref{app:no-aux} discusses this point in detail.

The obstruction comes from symmetry. Relabeling the vertices by a permutation $\pi\in\Sigma_N$ permutes the edge variables through $x_{i,j}\mapsto x_{\pi(i),\pi(j)}$, and this action maps $S$ onto itself and likewise its complement, since a relabeled SRG is an SRG with the same parameters and a relabeled non-SRG is a non-SRG. The first step is to identify the quadratic functions that are themselves invariant under all relabelings, and it turns out that the space of such functions is remarkably small. The following lemma is proved in Appendix~\ref{app:no-aux}; the underlying principle, that label-invariant polynomials in edge variables are spanned by subgraph counts, is standard in the theory of graph parameters \cite{lovasz2012}.

\begin{lemma}\label{lem:invariant-quadratics}
Every $\Sigma_N$-invariant polynomial of degree at most two in the edge variables is a linear combination, with constant coefficients, of the four functions
\begin{equation}\label{eq:invariant-basis}
1, \quad \mathcal{E}=\!\!\sum_{0\le i<j\le N-1}\!\!x_{i,j}, \quad \mathcal{W}=\sum_{v}\binom{d_v}{2}, \quad \mathcal{D}=\binom{\mathcal{E}}{2}-\mathcal{W},
\end{equation}
where $d_v=\sum_{u\ne v}x_{u,v}$ is the degree of vertex $v$. Here $\mathcal{E}$ counts edges, $\mathcal{W}$ counts pairs of edges sharing a vertex, and $\mathcal{D}$ counts disjoint pairs of edges. Consequently, the value of any such polynomial at a configuration $x$ depends on $x$ only through its degree sequence $\bigl(d_0(x),\dots,d_{N-1}(x)\bigr)$.
\end{lemma}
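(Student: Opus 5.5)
Since every $x_{i,j}$ is binary, I will first reduce to multilinear polynomials using $x_{i,j}^2=x_{i,j}$. This identity holds on $\{0,1\}^m$, so every polynomial of degree at most two agrees there with a unique multilinear polynomial
\[
f=c_0+\sum_{e}a_e x_e+\sum_{\{e,e'\},\,e\neq e'}b_{e,e'}\,x_e x_{e'},
\]
where $e,e'$ range over the $m$ unordered vertex pairs. Multilinear monomials form a basis of the functions on the cube, so the coefficients are uniquely determined by the function. The relabeling action sends monomials to monomials of the same degree. Invariance of $f$ under $\Sigma_N$ is therefore equivalent to the coefficient functions $e\mapsto a_e$ and $\{e,e'\}\mapsto b_{e,e'}$ being constant on $\Sigma_N$-orbits. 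The key step is then to classify these orbits.

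\emph{Orbits on single edges.} $\Sigma_N$ acts transitively on unordered pairs of distinct vertices, so all $a_e$ equal a common value $a$ and the linear part is $a\,\mathcal{E}$.

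\emph{Orbits on unordered pairs of distinct edges.} Two distinct edges either share exactly one vertex, $\{i,j\},\{j,l\}$ with $i,j,l$ distinct, or are vertex-disjoint, $\{i,j\},\{l,p\}$ with four distinct vertices. They cannot share two vertices without coinciding. Within each type any two instances are related by a permutation, since any injective map on three (resp. four) labels extends to an element of $\Sigma_N$. Hence there are at most two orbits: one for $N\ge3$ and one more for $N\ge4$. Empty orbits simply contribute the zero function. The quadratic part is thus $b_1\sum(\text{adjacent pairs})+b_2\sum(\text{disjoint pairs})$.

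Next I identify these two sums with $\mathcal{W}$ and $\mathcal{D}$. Expanding
\[
\binom{d_v}{2}=\sum_{\{u,w\},\,u\neq w}x_{v,u}x_{v,w}
\]
shows that $\mathcal{W}$ is exactly the sum of $x_ex_{e'}$ over adjacent edge pairs. Each such pair shares a unique vertex, so nothing is double counted. Likewise $\binom{\mathcal{E}}{2}=\sum_{\{e,e'\}}x_ex_{e'}$ on the cube, using $x_e^2=x_e$ once more. Subtracting $\mathcal{W}$ therefore leaves the sum over disjoint pairs, which is $\mathcal{D}$. So $f=c_0+a\mathcal{E}+b_1\mathcal{W}+b_2\mathcal{D}$.

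Finally, $\mathcal{E}=\tfrac12\sum_v d_v$, $\mathcal{W}=\sum_v\binom{d_v}{2}$, and $\mathcal{D}=\binom{\mathcal{E}}{2}-\mathcal{W}$. Each is a function of the degree sequence, so $f$ depends on $x$ only through $(d_0,\dots,d_{N-1})$.

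I expect the main obstacle to be bookkeeping rather than depth. The argument needs uniqueness of the multilinear representation, so that invariance of $f$ as a function forces invariance of its coefficients. It also needs the two-edge orbit classification, which must check that edge pairs with three versus four distinct vertices cannot be interchanged. This holds because permutations preserve the size of the union of vertex sets.
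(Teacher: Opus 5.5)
Your proof is correct and is essentially the paper's argument. The coefficients of an invariant polynomial are constant on $\Sigma_N$-orbits of monomials, the three orbits (single slots, slot pairs sharing one vertex, disjoint slot pairs) have orbit sums $\mathcal{E}$, $\mathcal{W}$, $\mathcal{D}$, and these depend only on the degree sequence. Your explicit passage to the unique multilinear representation via $x_e^2=x_e$ is a welcome addition: it justifies the step the paper leaves implicit, namely that invariance of $f$ as a function on $\{0,1\}^m$ forces invariance of its coefficients, and it makes clear that the identities for $\mathcal{W}$ and $\mathcal{D}$ hold on the cube rather than as formal polynomials.
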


The physical content of the lemma is that a relabeling-invariant two-body Hamiltonian on edge variables can measure nothing about a graph beyond its degree statistics. In particular it assigns one and the same energy to every $k$-regular graph, and cannot tell the Petersen graph from any other cubic graph on ten vertices. This is all that is needed for the main result.

\begin{theorem}\label{thm:no-aux}
Let $S\neq\emptyset$. There exists a polynomial $f$ of degree at most two in the edge variables, not assumed invariant, with $\mathcal{M}(f)=S$ if and only if $S=R$, that is, if and only if every $k$-regular graph on $N$ vertices is an SRG with parameters $(N,k,\lambda,\mu)$. When $S=R$, the regularity penalty $H_{\mathrm{gen}}$ of Eq.~\eqref{eq:Hgen} is such an encoding.
\end{theorem}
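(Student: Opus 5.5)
The plan is a symmetrization argument that reduces everything to Lemma~\ref{lem:invariant-quadratics}. The ``if'' direction is immediate. $H_{\mathrm{gen}}$ is a sum of squares of affine functions of the edge variables, so it is quadratic. It is nonnegative and vanishes exactly on the $k$-regular graphs. Hence $\mathcal{M}(H_{\mathrm{gen}})=R$, which equals $S$ under the hypothesis $S=R$. Note that $R\supseteq S\neq\emptyset$, so the minimum value $0$ is actually attained.

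For the ``only if'' direction, suppose $f$ has degree at most two and $\mathcal{M}(f)=S$. Let $f^\star=\min f$. Define the symmetrization
\begin{equation*}
\bar f(x)=\frac{1}{N!}\sum_{\pi\in\Sigma_N} f(\pi\cdot x),
\end{equation*}
where $(\pi\cdot x)_{i,j}=x_{\pi(i),\pi(j)}$. Each $f\circ\pi$ is obtained from $f$ by a linear substitution of variables, so it is still a polynomial of degree at most two; hence $\bar f$ is a $\Sigma_N$-invariant quadratic. We then compare values on and off $S$.
\begin{itemize}
\item If $x\in S$, every relabeling $\pi\cdot x$ lies in $S$, because $S$ is closed under the action. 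Every term of the average equals $f^\star$, so $\bar f(x)=f^\star$.
\item If $x\notin S$, every $\pi\cdot x\notin S$, because the complement of $S$ is also invariant. Every term is then strictly larger than $f^\star$, so $\bar f(x)>f^\star$.
\end{itemize}
Therefore $\mathcal{M}(\bar f)=S$, and we may assume without loss of generality that $f$ is invariant.

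By Lemma~\ref{lem:invariant-quadratics}, the invariant $\bar f$ depends on $x$ only through its degree sequence. Fix some $s\in S$; it is $k$-regular. Every $r\in R$ has the same degree sequence $(k,\dots,k)$ as $s$, so $\bar f(r)=\bar f(s)=f^\star$. Hence $r\in\mathcal{M}(\bar f)=S$, giving $R\subseteq S$. Together with $S\subseteq R$ this yields $S=R$.

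I expect no real obstacle here once the lemma is granted. The only points needing care are that composing with the permutation action preserves degree at most two, and that strict inequality survives averaging. The latter holds because \emph{all} terms in the average for $x\notin S$ are strict, not merely one, which is exactly where invariance of the complement of $S$ is used. The genuine work, namely that invariant quadratics are spanned by $1,\mathcal{E},\mathcal{W},\mathcal{D}$, is delegated to Lemma~\ref{lem:invariant-quadratics}. If it had to be checked, I would do so by counting $\Sigma_N$-orbits of monomials of degree at most two: the empty monomial, single edges, pairs of edges sharing a vertex, and pairs of disjoint edges. Each orbit sum is one of the four basis functions, and $\sum_v \binom{d_v}{2}$ is a function of the degrees.
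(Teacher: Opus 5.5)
Your proposal is correct and follows the paper's proof essentially step for step: symmetrize $f$ over $\Sigma_N$, use the invariance of $S$ and of its complement to get $\mathcal{M}(\bar f)=S$, then invoke Lemma~\ref{lem:invariant-quadratics} to see that $\bar f$ is constant on $R$, which forces $R\subseteq S$. The only difference is cosmetic. You phrase the last step through a fixed $s\in S$ having the same degree sequence as every $r\in R$, whereas the paper expands $\bar f$ in the basis $1,\mathcal{E},\mathcal{W},\mathcal{D}$ and evaluates it on $k$-regular configurations.
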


\begin{proof}
If $S=R$, then $H_{\mathrm{gen}}$, which is quadratic, non-negative, and vanishes exactly on the $k$-regular configurations, satisfies $\mathcal{M}(H_{\mathrm{gen}})=R=S$.

Conversely, suppose some quadratic $f$ satisfies $\mathcal{M}(f)=S$, and let $f_{\min}$ be its minimum value, so that $f=f_{\min}$ on $S$ and $f>f_{\min}$ everywhere else. Consider the average of $f$ over all relabelings,
\begin{equation}\label{eq:fbar}
\bar f(x) \;=\; \frac{1}{N!}\sum_{\pi\in\Sigma_N} f(\pi\cdot x),
\end{equation}
which is again a polynomial of degree at most two, and invariant by construction. The averaging does not disturb the set of global minimizers. If $x\in S$, every relabeling $\pi\cdot x$ is again in $S$, so every term in the average equals $f_{\min}$ and $\bar f(x)=f_{\min}$; if $x\notin S$, every relabeling stays outside $S$, so every term is strictly larger than $f_{\min}$ and $\bar f(x)>f_{\min}$. Hence $\mathcal{M}(\bar f)=S$ as well. By Lemma~\ref{lem:invariant-quadratics} there exist constants $\kappa_0,\dots,\kappa_3$ such that $\bar f = \kappa_0 + \kappa_1 \mathcal{E} + \kappa_2 \mathcal{W} + \kappa_3 \mathcal{D}$, and every $k$-regular configuration yields the same three values $\mathcal{E}=Nk/2$, $\mathcal{W}=N\binom{k}{2}$ and $\mathcal{D}=\binom{Nk/2}{2}-N\binom{k}{2}$. Therefore $\bar f$ takes one and the same value on all of $R$. Since $S$ is a nonempty subset of $R$ on which $\bar f=f_{\min}$, that common value must be $f_{\min}$. But then every $k$-regular configuration attains the minimum, so $R\subseteq\mathcal{M}(\bar f)=S$, and therefore $R=S$.
\end{proof}

One might worry that the average should somehow remember $\lambda$ and $\mu$, since $f$ is designed with a target family in mind. It does, but only through its coefficients. Lemma~\ref{lem:invariant-quadratics} states that, whatever the coefficients, the value of an invariant quadratic at a configuration depends on the configuration only through its degree sequence, and degrees carry no information about common neighbors; this is precisely the information that an invariant quadratic is structurally unable to encode, however its coefficients are tuned. Appendix~\ref{app:no-aux} elaborates on this point.

\begin{corollary}\label{cor:no-aux-class}
Among feasible parameter sets, $S=R$ holds exactly in three degenerate situations, namely the perfect matchings $tK_2=\mathrm{SRG}(2t,1,0,0)$, for which every $1$-regular graph is a perfect matching; their complements, the cocktail-party graphs $K_{n\times2}=\mathrm{SRG}(2n,2n-2,2n-4,2n-2)$, since complementation $x\mapsto\mathbf{1}-x$ maps quadratics to quadratics; and the pentagon $C_5=\mathrm{SRG}(5,2,0,1)$, which is the unique $2$-regular graph on five vertices. The empty and the complete graph also satisfy $S=R$, trivially, and are discarded here as degenerate. For every parameter set that admits at least one $k$-regular non-SRG graph on $N$ vertices, no quadratic function of the edge variables has $S$ as its exact set of global minimizers, and auxiliary variables are unavoidable for an exact single-QUBO formulation.
\end{corollary}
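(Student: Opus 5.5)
The plan is to reduce the corollary to a purely graph-theoretic classification, using Theorem~\ref{thm:no-aux}. The theorem already shows that an exact quadratic encoding exists if and only if $S=R$. So it remains to determine for which feasible $(N,k,\lambda,\mu)$ every $k$-regular graph on $N$ vertices is strongly regular with these parameters. The last sentence of the corollary is then just the contrapositive of the theorem: if some $k$-regular non-SRG exists, then $S\neq R$ and no quadratic $f$ works. Whenever $S=\emptyset$ it also holds trivially, since $\mathcal{M}(f)$ is never empty.

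\textbf{Sufficiency.} I would check the three families directly.

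For $k=1$, every $1$-regular graph is a perfect matching $tK_2$. It is strongly regular with $\lambda=\mu=0$: adjacent vertices share no neighbour, and neither do non-adjacent ones. Hence $S=R$.

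For $k=2$, $N=5$, the only $2$-regular graph on five vertices is a disjoint union of cycles of length $\ge 3$ summing to $5$, which forces $C_5$. This graph is $\mathrm{SRG}(5,2,0,1)$.

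For the cocktail-party graphs, complementation $x\mapsto \mathbf 1-x$ is an affine bijection of the cube. It maps $(N-1-k)$-regular graphs to $k$-regular graphs and SRGs to SRGs with complementary parameters. So $S=R$ transfers from $tK_2$ to $K_{n\times 2}$, and a quadratic $f$ pulls back to the quadratic $f(\mathbf 1-x)$.

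\textbf{Necessity.} This is the main work. Assume $S=R\neq\emptyset$ and exclude $k=0$ and $k=N-1$. Using complementation, I may assume $k\le (N-1)/2$. I would then exhibit a $k$-regular graph that violates the SRG conditions, unless $k=1$ or $(N,k)=(5,2)$.

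For $k=2$ with $N\ge 6$, both $C_N$ and a disjoint union such as $C_3\cup C_{N-3}$ are $2$-regular. These cannot both be SRGs with the same parameters:
\begin{itemize}
\item[(i)] a connected SRG with $\mu>0$ must have diameter $2$, which $C_N$ lacks for $N\ge 6$;
\item[(ii)] a disconnected $2$-regular SRG must have $\mu=0$ and be a union of equal cliques, namely $tK_3$.
\end{itemize}
Since $C_N$ itself is connected and has diameter at least $3$, it is not an SRG when $N\ge 6$, and so $S\ne R$. For $N=4$ we have $k=2>(N-1)/2$, and the graph $C_4=K_{2\times 2}$ is already a cocktail party graph.

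For $k\ge 3$, the most economical witness is the one I expect to be the obstacle: one needs a uniform construction of a $k$-regular graph that is not an SRG, valid for all $N$ with $Nk$ even and $k\le (N-1)/2$. I would try two candidates first.
\begin{itemize}
\item[(a)] A graph that is disconnected or has diameter $\ge 3$. For $N\ge 2(k+1)$ take $K_{k+1}$ together with a $k$-regular graph on the remaining $N-k-1\ge k+1$ vertices, chosen to be non-complete when possible. This forces $\mu=0$ but leaves a non-clique component, so it is not an SRG.
\item[(b)] Two $k$-regular graphs on $N$ vertices with different triangle counts, for instance a circulant $C_N(1,\dots,\lfloor k/2\rfloor)$ (with a diameter edge added if $k$ is odd) versus a graph obtained from it by a triangle-changing $2$-switch.
\end{itemize}
In case (b), both graphs cannot be $\mathrm{SRG}(N,k,\lambda,\mu)$, because the number of triangles $Nk\lambda/6$ is fixed by the parameters.

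The $2$-switch argument in (b) is the cleanest general route. Since $k\ge 3$ and $N-1-k\ge k\ge 3$, every $k$-regular graph that is neither complete multipartite-like nor a disjoint union of cliques admits a switch $ab,cd\to ac,bd$ that changes the number of triangles. Verifying this claim carefully, including the small residual cases with $N\le 2k+1$ checked by hand, is where I expect the real effort to lie.
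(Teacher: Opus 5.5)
Your handling of the last sentence (the contrapositive of Theorem~\ref{thm:no-aux}, with $S=\emptyset$ covered by $\mathcal{M}(f)\neq\emptyset$), the three sufficient families, the complementation reduction to $k\le (N-1)/2$, and the case $k=2$ is all correct. The first of these follow the paper's own argument. The gap is the case $k\ge 3$, which you leave open, and it carries the whole content of the word ``exactly''.

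Route (a) cannot settle it. For $N=2k+1$ there is no disconnected $k$-regular graph at all, since each component needs at least $k+1$ vertices. For $N=2k+2$ the only one is $2K_{k+1}=\mathrm{SRG}(2k+2,k,k-1,0)$. Even for $N\ge 2k+3$ you must take $H$ connected rather than merely non-complete, since otherwise $H$ may itself be a union of copies of $K_{k+1}$. Route (b) rests on a switching claim that is unproved and not precisely formulated: ``complete multipartite-like'' is never defined.

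For comparison, the paper does not prove this direction either. It proves sufficiency as you do, and for necessity it only asserts that non-SRG $k$-regular companions exist generically. It supports this by appealing to the growth of the number of regular graphs, to exhaustive checks of a few small cases, and to the $K_{15}$ example for Conway's parameters.

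A short uniform witness closes the step and makes both (a) and (b) unnecessary.
\begin{itemize}
\item If $k=2r$ with $r\ge 2$ and $N\ge 2k+1$, take the circulant with $i\sim i\pm1,\dots,i\pm r \pmod N$. The adjacent pairs $\{0,1\}$ and $\{0,r\}$ have $2r-2$ and $r-1$ common neighbours respectively, so $\lambda$ is not constant.
\item If $k=2r+1$, then $N$ is even and $N\ge 2k+2$; add the antipodal edges $i\sim i+N/2$ to the circulant. For $r\ge 2$ the adjacent pairs $\{0,1\}$ and $\{0,N/2\}$ have $2r-2\ge 2$ and $0$ common neighbours.
\item For $r=1$ this is the M\"obius ladder with $N\ge 8$. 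There the non-adjacent pairs $\{0,2\}$ and $\{0,N/2-1\}$ have $1$ and $2$ common neighbours.
\end{itemize}
Hence $R$ contains a non-SRG whenever $3\le k\le (N-1)/2$. Combined with your $k=2$ argument and complementation, this makes the classification an actual theorem rather than the generic expectation the paper relies on.
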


The hypothesis of the corollary is satisfied in every non-degenerate case of interest, and in particular for Conway's problem. The disjoint union of the complete graph $K_{15}$, which is $14$-regular, with any $14$-regular graph on $84$ vertices is a $14$-regular graph on $99$ vertices whose adjacent pairs do not share exactly one common neighbor, so $14$-regular non-SRG graphs on $99$ vertices exist. If, on the other hand, no Conway graph exists at all, then $S$ is empty while $\mathcal{M}(f)$ never is. Whether or not Conway's graph exists, therefore, no QUBO over the $4851$ edge variables can have the Conway configurations as its exact set of global minimizers. To our knowledge, this is the first formal no-go statement for auxiliary-free QUBO approaches to this problem.

The three degenerate families of Corollary~\ref{cor:no-aux-class} will reappear in the sampling experiments of Sec.~\ref{sec:pipeline} as a regime of guaranteed success, since for them $\mathcal{M}(H_{\mathrm{gen}})=S$ already holds and $k$-regular generation alone constitutes the whole problem; their role in those experiments is that of a control confirming the boundary of the corollary, not that of an open exploration. Theorem~\ref{thm:no-aux} organizes the remainder of the paper. Section~\ref{sec:pipeline} pushes the pure-QUBO route as far as it can legitimately go, solving the degenerate families exactly and treating all others through a pipeline whose final certification is classical, while Sec.~\ref{sec:full-qubo} pays the price identified by the theorem and solves the full problem through auxiliary variables.

\section{Pure QUBO: exceptional families and a generate--prune--verify pipeline}
\label{sec:pipeline}

\subsection{Scope of the pure-QUBO route}
\label{subsec:scope}

On the three degenerate families of Corollary~\ref{cor:no-aux-class}, minimizing $H_{\mathrm{gen}}$ is already an exact SRG algorithm, since every $k$-regular graph is the target. We verified this boundary by exhaustive enumeration. For $\mathrm{SRG}(5,2,0,1)$ the set $\mathcal{M}(H_{\mathrm{gen}})$ contains $12$ configurations, all of them labeled pentagons, and for the cocktail-party case $\mathrm{SRG}(6,4,2,4)$ it contains $15$ configurations, all of them labeled copies of $K_{3\times2}$; handing $H_{\mathrm{gen}}$ to any QUBO solver therefore solves these families outright. The boundary is sharp. For the parameters $(6,3,0,3)$ the same set contains $70$ configurations, of which only $10$ are labeled copies of $K_{3,3}$, the unique SRG of that family, and Theorem~\ref{thm:no-aux} guarantees that no quadratic repair of $H_{\mathrm{gen}}$ exists.

Outside the degenerate regime, an exact pure-QUBO formulation is forbidden, but the pure-QUBO route remains useful provided one renounces exactness. We adopt a strategy that approaches the problem as closely as a pure QUBO over the edge variables permits, a transparent generate--prune--verify pipeline consisting of two QUBO stages,
\begin{enumerate}
    \item \textit{Stage I: Graph Generation.} Generation of highly restricted $k$-regular candidate graphs through the quadratic penalty $H_{\mathrm{gen}}$ coupled with a symmetry-breaking mechanism.
    \item \textit{Stage II: QUBO-Based Spectral Postselection.} QUBO postselection of the generated candidates using maximum clique analysis and the Hoffman bound.
\end{enumerate}
A final exact algebraic verification against Eq.~\eqref{eq:srg-cert} is then applied to the reduced output of the two QUBO stages. In this way the nontrivial search and postselection remain fully QUBO-based, while the terminal certification is deferred to an exact deterministic check on the surviving candidates. The output of the two QUBO stages is thus a reduced set of solution candidates rather than the solution itself; this is the structural price of avoiding auxiliary variables, and the classical postprocessing volume it entails is the quantity that ultimately limits the reach of the divided strategy as $N$ grows.

\subsection{Stage I: graph generation}

\subsubsection{QUBO matrix form}

The generation Hamiltonian $H_{\mathrm{gen}}$ of Eq.~\eqref{eq:Hgen} is a quadratic polynomial over binary variables and can be expressed in standard QUBO matrix form,
\begin{equation}\label{eq:qubo}
H_{\text{gen}} = \vec{x}^{\,T} Q \vec{x} + \text{const},
\end{equation}
where the additive constant $N k^2$ does not affect minimization and is omitted hereafter.
Expanding the square in Eq.~\eqref{eq:Hgen} reveals the linear terms $Q_{(i,j),(i,j)} = 2(1 - 2k)$ and the quadratic couplings $Q_{(i,j),(i,\ell)} = 2$ whenever two edges share a common vertex; all remaining entries of $Q_{(i,j),(p,q)}$ vanish, up to the symmetric placement induced by the chosen vectorization of $\vec{x}$.

\subsubsection{\texorpdfstring{Symmetry breaking: the $k$-regular fixed-row strategy}{Symmetry breaking: the k-regular fixed-row strategy}}
\label{subsec:fixed-row}

For large values of $N$, the search space of size $2^{N(N-1)/2}$ exhibits a high degree of degeneracy due to the $N!$ possible relabelings of the vertices \cite{babai2015graph}, the same symmetry exploited by the proof of Theorem~\ref{thm:no-aux}, which here becomes a practical obstacle by creating vast families of energetically equivalent configurations. To mitigate this, we introduce an explicit symmetry-breaking mechanism that exploits the homogeneity of the $k$-regular degree sequence.

By anchoring the adjacency pattern of the initial vertices, we force the solver to search within a specific canonical branch of the isomorphism tree, effectively collapsing a large portion of the variables into constants. Assuming $N \ge 2k$, the procedure is systematically executed as follows.
\begin{enumerate}
    \item \textit{Root Vertex Fixing.} Without loss of generality, we anchor vertex $0$ by enforcing its required $k$ connections sequentially. We set $x_{0,j} = 1$ for $j \in \{1,\dots,k\}$, and $x_{0,j} = 0$ for $j \in \{k+1,\dots,N-1\}$.
    \item \textit{Second Vertex Fixing.} Vertex $1$ is already adjacent to vertex $0$ by construction, since $x_{0,1}=1$. To satisfy $k$-regularity, it requires exactly $k-1$ additional connections. To break the remaining symmetry and prevent forced localized substructures, such as early triangle formation, we assign its remaining neighbors to the next available block of vertices. We set $x_{1,j} = 1$ for $j \in \{k+1,\dots,2k-1\}$, and $x_{1,j} = 0$ for all other remaining indices.
\end{enumerate}

Strictly speaking, only the Root Vertex Fixing is without loss of generality for the universe of $k$-regular graphs, since any such graph admits a labeling in which vertex $0$ is connected to the $k$ smallest-indexed vertices. The Second Vertex Fixing further imposes that vertex $1$ has no neighbors among $\{2,\dots,k\}$, which forbids triangles containing the edge $\{0,1\}$ and is therefore a labeling \emph{convention} that restricts the search to the still large subfamily of $k$-regular graphs admitting it. For triangle-free targets with $\lambda=0$, such as the Petersen and Clebsch graphs, the prescription is structurally consistent with the target and coincides with the natural canonical fixing of Sec.~\ref{app:lambda-zero-case}. For general-$\lambda$ targets, however, the prescription is not merely restrictive but exclusive. Vertices $0$ and $1$ are adjacent by construction, their common neighbors must lie in $\{2,\dots,k\}$ by the Root Vertex Fixing, and the Second Vertex Fixing together with $k$-regularity forces $x_{1,j}=0$ on that whole range, so any $k$-regular completion satisfies $c_{0,1}=0$; a target with $\lambda>0$ therefore admits no valid completion at all within this frame. The throughput survey of Fig.~\ref{fig:pipeline} deliberately retains the fixed-row prescription across all families, which turns the general-$\lambda$ targets with $N\ge 2k$ into controlled exposures of this exclusion. In the SRG-aware experiments, namely the benchmark of Sec.~\ref{subsubsec:sa-pipeline} and the auxiliary-variable study of Sec.~\ref{sec:full-qubo-exp}, we use instead the more permissive general-$\lambda$ canonical fixing of Sec.~\ref{app:general-case}, which leaves the $k-1$ entries of row $1$ in the slot $\{2,\dots,k\}$ as free variables subject to the global sum constraint $\sum_{j=2}^k x_{1,j}=\lambda$ and is consistent with any target. The fixed-row prescription of the present subsection should be understood as the $\lambda=0$ specialization of the SRG-aware canonical fixing, exposed here as a standalone symmetry-breaking heuristic for generic $k$-regular graph generation; the benchmarks of Fig.~\ref{fig:benchmark} are run in this $\lambda=0$ specialization on a $k$-regular testbed and illustrate its impact on solver performance.

Injecting these fixed values into $H_{\text{gen}}$ algebraically reduces the dimensionality of the QUBO matrix and redefines the linear biases of the remaining free variables. Figure~\ref{fig:benchmark} illustrates the impact of this symmetry-breaking strategy.

\begin{figure}
    \includegraphics[width=0.9\linewidth]{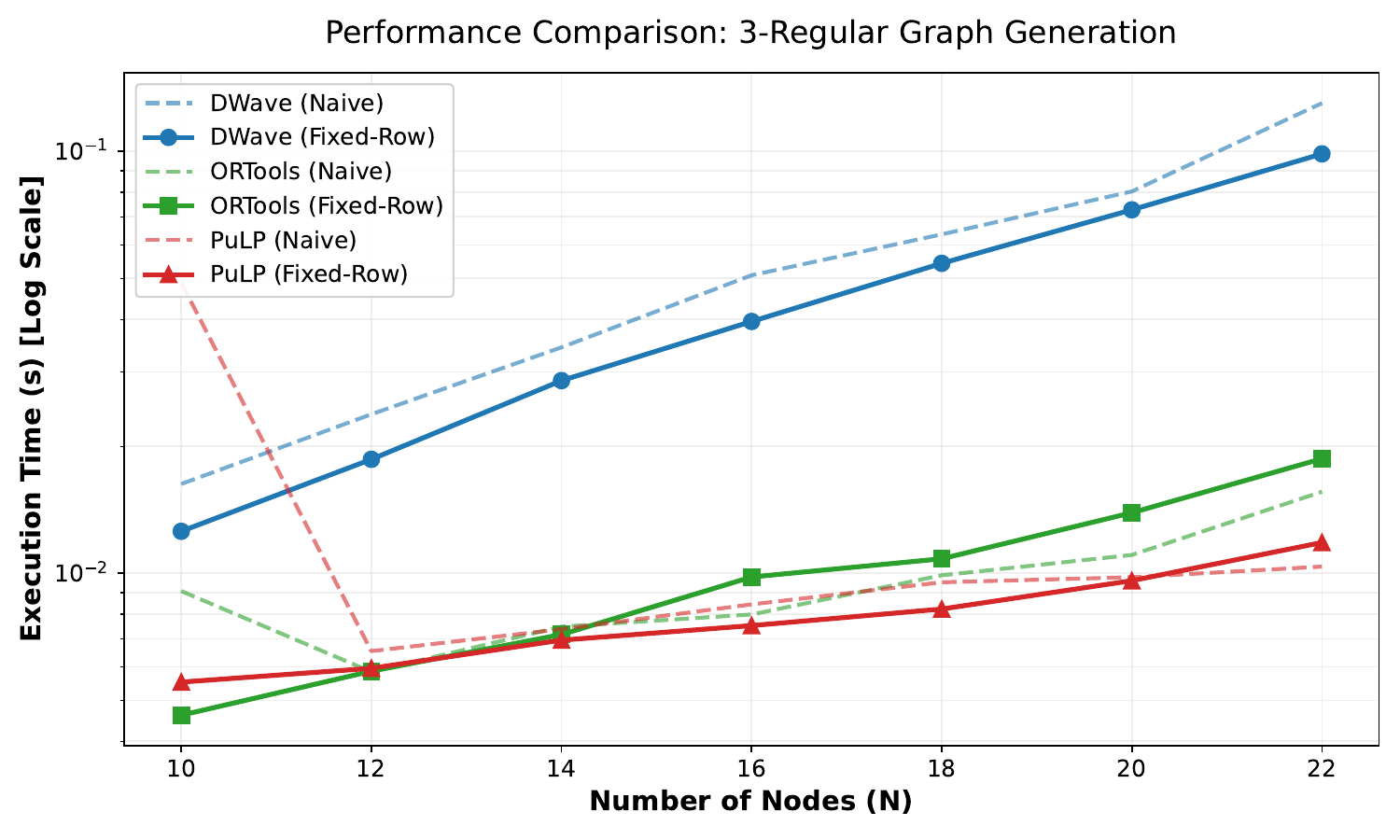}
    \caption{\justifying Effect of the fixed-row symmetry breaking on Stage~I QUBO solution times, evaluated on $k$-regular graph generation with $k=3$ as a controlled testbed; this case isolates the optimization cost of $k$-regularity and is not itself an SRG target. For each solver considered, the solid line shows the wall-clock time of the fixed-row formulation of Sec.~\ref{subsec:fixed-row} and the dashed line shows the naive formulation; the gap between solid and dashed within each solver quantifies the speedup obtained from the symmetry reduction. The figure is intended as a per-solver before and after comparison. Cross-solver readings reflect heterogeneous implementations and access modalities and are not meant as a quantum-versus-classical benchmark. For the D-Wave data in particular, the reported values are end-to-end platform-level elapsed times obtained through remote access and therefore do not resolve device-level quantities such as physical annealing time, number of reads, embedding overhead, or run-by-run success rates; we do not currently have direct access to the hardware to report those metrics separately.}
    \label{fig:benchmark}
\end{figure}

\begin{algorithm}
\SetAlgoLined
\DontPrintSemicolon
\KwIn{Number of nodes $n$, desired regular degree $k$}
\KwOut{Regular graph of degree $k$}
\BlankLine
\textbf{Apply fixed-row symmetry breaking} following Sec.~\ref{subsec:fixed-row}:\;
\quad For $j\in\{1,\dots,k\}$, fix $x_{0,j}\leftarrow 1$; for $j\in\{k+1,\dots,n-1\}$, fix $x_{0,j}\leftarrow 0$.\;
\quad For $j\in\{k+1,\dots,2k-1\}$, fix $x_{1,j}\leftarrow 1$; for all other $j\ge 2$, fix $x_{1,j}\leftarrow 0$.\;
\BlankLine
\textbf{Define} the remaining binary variables $x_{i,j}$ for $i<j$, where $x_{i,j} = 1$ if there is an edge between nodes $i$ and $j$, $0$ otherwise.\;
\BlankLine
\textbf{Construct} the QUBO matrix $Q$ on the unfixed variables; the fixed-row substitution renormalizes the linear biases of the remaining $x_{i,j}$ and removes their contribution as constants from the Hamiltonian:\;
\For{$i \leftarrow 0$ \KwTo $n-1$}{
    \For{$j \leftarrow i+1$ \KwTo $n-1$}{
        \textbf{Initialize} $Q_{(i,j),(i,j)} \leftarrow 0$\;
    }
}
\BlankLine
\For{$i \leftarrow 0$ \KwTo $n-1$}{
    \textbf{Define} $M$ as a large penalty constant.\;
    \textbf{Obtain} the list of unfixed variables associated with node $i$:\;
    $\quad$ $V_i = \{ x_{i,j} \mid j = i+1, \dots, n-1 \} \cup \{ x_{j,i} \mid j = 0, \dots, i-1 \}$\;
    \BlankLine
    \textbf{Add linear terms} to the QUBO:\;
    \ForEach{$x_{i,j} \in V_i$}{
        $Q_{(i,j),(i,j)} \leftarrow Q_{(i,j),(i,j)} + (1 - 2k) \times M$\;
    }
    \BlankLine
    \textbf{Add quadratic terms} to the QUBO:\;
    \ForEach{$x_{i,j}, x_{i,l} \in V_i$ with $x_{i,j} \neq x_{i,l}$}{
        $Q_{(i,j),(i,l)} \leftarrow Q_{(i,j),(i,l)} + 2 \times M$\;
    }
}
\BlankLine
\textbf{Minimize} the objective function:\;
\[
H_{\mathrm{gen}} = \sum_{(i,j)} Q_{(i,j),(i,j)} x_{i,j} + \sum_{(i,j) \neq (p,q)} Q_{(i,j),(p,q)} x_{i,j} x_{p,q}
\]
using a classical or quantum QUBO solver.\;
\BlankLine
\textbf{Reconstruct} the graph $G$ from the fixed-row values and the optimal solution:\;
\For{$i \leftarrow 0$ \KwTo $n-1$}{
    \For{$j \leftarrow i+1$ \KwTo $n-1$}{
        \If{$x_{i,j} = 1$}{
            Add edge between nodes $i$ and $j$ in $G$\;
        }
    }
}
\BlankLine
\textbf{Verify} that each node in $G$ has degree $k$.\;
\If{$G$ is a regular graph of degree $k$}{
    \Return{$G$}
}
\Else{
    \Return{No valid graph found}
}
\caption{\justifying Generation of a Regular Graph via QUBO with Fixed-Row Symmetry Breaking}
\end{algorithm}

\subsection{Stage II: QUBO-based spectral postselection}

In this stage, the input graphs are precisely the candidate graphs produced by Stage I; no externally constructed benchmark graphs are supplied to the postselection routine in deployment. Throughout this subsection, however, the Max-Clique QUBO subroutine is first validated against ground-truth SRG instances of known clique number to ensure that the postselection acts on the actual $\omega(G)$ rather than on a heuristic under-estimate; only after this validation does the subroutine consume Stage I output in the full pipeline. The purpose of this stage is to retain the postselection inside the QUBO setting. For this reason, rather than replacing it by a direct spectral diagonalization of each candidate adjacency matrix, we formulate the Maximum Clique problem as a secondary QUBO and use the Hoffman bound as a necessary filter for SRG structure.

An SRG with parameters $(N,k,\lambda,\mu)$ has exactly three distinct eigenvalues, namely $k$ with multiplicity $1$ and two additional eigenvalues $\theta_2$ and $\theta_3$ uniquely determined by
\begin{equation}\label{eq:lambda23}
    \theta_2, \theta_3 = \frac{(\lambda-\mu) \pm \sqrt{(\lambda-\mu)^2 + 4(k-\mu)}}{2}.
\end{equation}
This rigid spectral structure determines the Hoffman bound expected for the target SRG family and sharply constrains global combinatorial properties, such as the clique number $\omega(G)$, which represents the size of the largest fully connected subgraph. The Hoffman bound \cite{hoffman_1970,delsarte_1973_thesis,haemers2021} establishes that any $k$-regular graph with smallest eigenvalue $\theta_{\min}$ satisfies
\begin{equation}
    \omega(G) \le 1 + \frac{k}{|\theta_{\min}|}.
\end{equation}
For an SRG with parameters $(N,k,\lambda,\mu)$ the relevant smallest eigenvalue is $\theta_{\min} = \theta_3$, the lower root of Eq.~\eqref{eq:lambda23}. Since $\omega(G)$ must be an integer, the bound naturally implies $\omega(G) \le \lfloor 1 + k/|\theta_{\min}| \rfloor$. For many SRG families, this bound is tight. We use the bound here as a filter on Stage~I candidates by computing $|\theta_{\min}|$ from the \emph{target} parameters $(\lambda,\mu,k)$, not from the candidate's own spectrum. A candidate graph $G_{\mathrm{cand}}$ that is in fact isomorphic to the target SRG must satisfy $\omega(G_{\mathrm{cand}}) \le \lfloor h \rfloor$, where $h = 1 + k/|\theta_{\min}^{\mathrm{target}}|$ is the Hoffman value of the target family, so failure of the inequality is sufficient to reject $G_{\mathrm{cand}}$ as a candidate for that specific target. The converse is not true, since a $k$-regular graph that is \emph{not} the target SRG may still satisfy the inequality; the bound is only a necessary condition for target-equality, and not in general a necessary condition for being some other SRG, let alone for being non-SRG. The filter is therefore properly read as a consistency test against the spectrum of the target family, rather than as a generic spectral test on $G_{\mathrm{cand}}$; the rejection of false-positive non-SRGs is the responsibility of the final algebraic certification of Sec.~\ref{subsubsec:cert}. By formulating the Maximum Clique problem as a secondary QUBO, we can test the target-equality necessary condition while keeping the postselection inside the QUBO framework.

\subsubsection{QUBO formulation for clique verification}

Given the adjacency matrix of a candidate graph $G_{\mathrm{cand}}$, we introduce binary variables $z_i \in \{0,1\}$, where $z_i = 1$ denotes that vertex $i$ belongs to the clique. The corresponding QUBO reads
\begin{equation}\label{eq:qubo_clique_final}
    H_{\mathrm{clique}} = -\gamma \sum_{i=0}^{N-1} z_i + \delta \sum_{(u,v) \notin E(G_{\mathrm{cand}})} z_u z_v,
\end{equation}
where $\delta > \gamma > 0$. The first term incentivizes maximizing the clique size, while the second penalizes the inclusion of non-adjacent pairs. To guarantee valid cliques, we strictly set $\delta > \gamma$; otherwise, the solver could incorrectly lower the energy by including a disconnected vertex, as the linear reward $\gamma$ would outweigh the quadratic penalty $\delta$. 

Minimizing $H_{\mathrm{clique}}$ yields the observed clique number $\omega_{\mathrm{obs}} = \sum_{i=0}^{N-1} z_i^\star$. If $\omega_{\mathrm{obs}}$ respects the theoretical Hoffman bound for the target SRG, the candidate is retained by the QUBO postselection stage and passed to the final exact algebraic verification. We have validated this premise empirically on five representative SRG families, as reported in Table~\ref{tab:validation}. For each of them the Max-Clique QUBO recovers the true clique number $\omega_{\mathrm{obs}}=\omega$ in twenty out of twenty independent simulated-annealing runs at $n_{\mathrm{reads}}=100$, confirming that the postselection acts on the actual $\omega(G)$ rather than on a heuristic under-estimate. The same table also reports a non-SRG control, the $5$-prism graph $C_5\square K_2$, also known as the pentagonal prism, which satisfies $\omega_{\mathrm{obs}}\le \lfloor h \rfloor$ with the Petersen Hoffman bound but is rejected by the algebraic verification of Sec.~\ref{subsubsec:cert}, illustrating the necessity of that final step.

\begin{algorithm}
\SetAlgoLined
\DontPrintSemicolon
\KwIn{Candidate $k$-regular graph $G_{\mathrm{cand}}=(V,E)$ on $N$ vertices, target SRG parameters $(\lambda,\mu)$, penalty weights $\gamma>0$ and $\delta>\gamma$}
\KwOut{Decision: \textsc{Retain} or \textsc{Reject}}
\BlankLine
\textbf{Compute} the Hoffman bound for the target SRG family:\;
$\quad D \leftarrow (\lambda-\mu)^2 + 4(k-\mu)$\;
$\quad \theta_{\min} \leftarrow \big((\lambda-\mu) - \sqrt{D}\big)/2$\;
$\quad h \leftarrow 1 + k/|\theta_{\min}|$\;
\BlankLine
\textbf{Define} binary variables $z_i\in\{0,1\}$ for $i=0,\dots,N-1$, where $z_i=1$ if vertex $i$ is selected into the clique.\;
\BlankLine
\textbf{Construct} the Max-Clique QUBO over $G_{\mathrm{cand}}$:\;
\[
H_{\mathrm{clique}} = -\gamma\sum_{i=0}^{N-1} z_i + \delta \!\!\!\!\sum_{(u,v)\notin E} \!\!\!\! z_u\, z_v
\]
\BlankLine
\textbf{Minimize} $H_{\mathrm{clique}}$ using a classical or quantum QUBO solver, yielding $\{z_i^\star\}_{i=0}^{N-1}$.\;
\BlankLine
\textbf{Compute} the observed clique number:\;
$\quad \omega_{\mathrm{obs}} \leftarrow \sum_{i=0}^{N-1} z_i^\star$\;
\BlankLine
\If{$\omega_{\mathrm{obs}} \le \lfloor h \rfloor$}{
    \Return{\textsc{Retain}}\;
}
\Else{
    \Return{\textsc{Reject}}\;
}
\caption{QUBO-Based Spectral Postselection via the Hoffman Bound}
\end{algorithm}

\begin{table*}[tbp]
\centering
\caption{\justifying Empirical validation of the Stage~II QUBO-based postselection. For each target, we report the integer-floored Hoffman bound $\lfloor h \rfloor$, the theoretical clique number $\omega$ from algebraic graph theory, and the observed clique number $\omega_{\mathrm{obs}}$ obtained by minimizing the Max-Clique QUBO of Eq.~\eqref{eq:qubo_clique_final} with $\gamma=1$, $\delta=2$ on \texttt{neal} simulated annealing at $n_{\mathrm{reads}}=100$ and $n_{\mathrm{reps}}=20$ independent runs per target. In all five SRG targets the QUBO recovers the true clique number $\omega_{\mathrm{obs}}=\omega$ in twenty out of twenty repetitions, with Wilson $95\%$ lower confidence bound at least $0.84$, validating the premise that the postselection acts on the actual $\omega(G)$ rather than on a heuristic under-estimate. The last row is a non-SRG control. The $5$-prism graph $C_5\square K_2$ is $3$-regular with $\omega=2$ and therefore $\omega_{\mathrm{obs}} \le \lfloor h \rfloor$ with the Petersen-graph Hoffman bound, demonstrating that the Hoffman filter is necessary but not sufficient; this candidate is rejected by the final exact algebraic verification of Sec.~\ref{subsubsec:cert}, since its non-adjacent pairs share $0$, $1$, or $2$ common neighbors and the constant-$\mu$ condition fails.}
\label{tab:validation}
\begin{tabular}{lccccc}
\hline
Target & $N$ & $\lfloor h \rfloor$ & $\omega$ & $\omega_{\mathrm{obs}}$ & Relation \\
\hline
Rook $L_2(4)$        & 16 & 4 & 4 & 4 & $\omega_{\mathrm{obs}}=\omega=\lfloor h \rfloor$ \\
Triangular $T_6$     & 15 & 5 & 5 & 5 & $\omega_{\mathrm{obs}}=\omega=\lfloor h \rfloor$ \\
Petersen             & 10 & 2 & 2 & 2 & $\omega_{\mathrm{obs}}=\omega=\lfloor h \rfloor$ \\
Paley $P_{13}$       & 13 & 3 & 3 & 3 & $\omega_{\mathrm{obs}}=\omega=\lfloor h \rfloor$ \\
Paley $P_{17}$       & 17 & 4 & 3 & 3 & $\omega_{\mathrm{obs}}=\omega<\lfloor h \rfloor$ \\
\hline
$5$-prism graph $C_5\square K_2$\textsuperscript{$\dagger$} & 10 & 2 & 2 & 2 & passes Hoffman; fails certification \\
\hline
\end{tabular}
\\[3pt]
{\footnotesize $^\dagger$\,Non-SRG control, $3$-regular and triangle-free, with the same $\omega$ as the Petersen graph.}
\end{table*}

\subsubsection{Algebraic certification}
\label{subsubsec:cert}

The Max-Clique QUBO provides a necessary QUBO-based postselection criterion, but it is not sufficient for SRG certification. The surviving candidates are therefore subjected to a final exact algebraic verification of the SRG constraints. Let $A$ be the adjacency matrix of a candidate graph that has passed Stages I and II. We certify the instance by checking the defining equation~\eqref{eq:srg-cert}, together with the conditions $A_{ii}=0$ and $A\mathbf{1}=k\mathbf{1}$. Equivalently, one may verify that every adjacent pair shares exactly $\lambda$ common neighbors and every non-adjacent pair shares exactly $\mu$ common neighbors.

The role of this final step is different from that of the two QUBO stages. Its cost is polynomial in $N$ for a given candidate adjacency matrix, for example $O(N^3)$ using dense matrix operations or an equivalent pairwise common-neighbor count, whereas the hard part of the problem lies in the combinatorial search over the $2^{N(N-1)/2}$ admissible adjacency assignments addressed by the QUBO stages. The exact verification therefore closes the pipeline with a deterministic certification step on a drastically reduced candidate set, rather than replacing the original search problem.

To quantify the discriminative power of the QUBO postselection beyond a single example, we have run exhaustive enumeration of the Stage~I universe for every SRG target in the Brouwer table of Appendix~\ref{app:feasibility} with $N \le 10$, namely the pentagon $C_5=\mathrm{SRG}(5,2,0,1)$, the Paley graph $\mathrm{SRG}(9,4,1,2)$, and the Petersen graph $\mathrm{SRG}(10,3,0,1)$. For each target we enumerate all non-isomorphic connected $k$-regular graphs on $N$ vertices via the standard generator \texttt{geng} from the \texttt{nauty} package \cite{mckay_piperno_nauty}, compute their clique numbers exactly via the Bron--Kerbosch algorithm, retain those satisfying the Hoffman acceptance condition $\omega \le \lfloor h \rfloor$, and finally test the algebraic SRG identity~\eqref{eq:srg-cert} on each survivor. The results are summarized in Table~\ref{tab:hoffman-fp}.

\begin{table}[tbp]
\centering
\caption{\justifying Exhaustive enumeration of the Stage~I universe for every SRG target in the Brouwer table of Appendix~\ref{app:feasibility} with $N\le 10$. For each, we list the integer-floored Hoffman bound $\lfloor h \rfloor$, the size of the Stage~I universe of connected $k$-regular non-isomorphic graphs on $N$ vertices, enumerated via \texttt{nauty-geng}~\cite{mckay_piperno_nauty}, the number of survivors of the Hoffman filter, namely those with $\omega \le \lfloor h \rfloor$, the number of those survivors that further satisfy the algebraic SRG identity, and the resulting false-positive rate of the filter on its own survivors. The pentagon is a degenerate case in which the universe contains a single connected $2$-regular graph, the cycle $C_5$ itself, which is also the SRG, in agreement with Corollary~\ref{cor:no-aux-class}. For the two non-trivial cases, the false-positive rate is strictly above $80\%$, demonstrating that the Hoffman filter is a necessary but far from sufficient condition for SRG structure and that the final algebraic verification of Sec.~\ref{subsubsec:cert} is essential rather than auxiliary.}
\label{tab:hoffman-fp}
\begin{tabular}{lcccccc}
\hline
Target & $(N,k,\lambda,\mu)$ & $\lfloor h \rfloor$ & Universe & Survivors & SRGs & FP rate \\
\hline
Pentagon $C_5$ & $(5,2,0,1)$  & $2$ & $1$  & $1$  & $1$ & $0.0\%$ \\
Paley $P_9$    & $(9,4,1,2)$  & $3$ & $16$ & $14$ & $1$ & $92.9\%$ \\
Petersen       & $(10,3,0,1)$ & $2$ & $19$ & $6$  & $1$ & $83.3\%$ \\
\hline
\end{tabular}
\end{table}

For $N\ge 13$ the universe of connected $k$-regular graphs on $N$ vertices exceeds $10^6$ instances and exhaustive enumeration with off-the-shelf tools is no longer feasible within a reasonable computational budget. On the small-$N$ frontier, however, the false-positive rate is consistently large. The Hoffman filter prunes a substantial fraction of the universe yet retains many non-SRG candidates, confirming quantitatively the role of the algebraic verification of Sec.~\ref{subsubsec:cert} as the final certification step of the pipeline.

To complement this exhaustive small-$N$ enumeration with an end-to-end characterization that extends to parameter regimes inaccessible to direct enumeration, we have conducted a throughput survey across the $18$ feasible SRG targets with $N \le 18$. For each family, $10^5$ independent simulated-annealing reads are drawn on the Stage~I Hamiltonian $H_{\mathrm{gen}}$ under the family-agnostic fixed-row prescription of Sec.~\ref{subsec:fixed-row}, applied in full when $N \ge 2k$ and reduced to the root-vertex anchoring otherwise; every read is decoded and tested for $k$-regularity, the integer-floored Hoffman bound of the target family, and the defining equation~\eqref{eq:srg-cert}. The Stage~I yield is essentially saturated throughout, with at least $99.6\%$ of reads returning a $k$-regular graph, so the survey directly probes the two filters that follow. The results, summarized in Fig.~\ref{fig:pipeline}, organize the $18$ targets into four regimes. A \emph{trivial} regime comprises the pentagon $C_5$ and the cocktail-party graphs $K_{n\times 2}$ with $\mu = k$, seven families in all; for these, Corollary~\ref{cor:no-aux-class} predicts the outcome in advance, since every $k$-regular candidate is the target SRG and both filters are vacuous, and the observed essentially perfect yield is therefore not a finding but an experimental control, confirming the boundary of the corollary and validating the implementation. An \emph{excluded} regime comprises $L_2(3)$, Paley$(13)$, $\mathrm{GQ}(2,2)$, $L_2(4)$ and Paley$(17)$, the five general-$\lambda$ targets to which the full fixed-row prescription applies; their yield vanishes for the structural reason made precise in Sec.~\ref{subsec:fixed-row}, namely that the $\lambda=0$ anchor excludes every valid completion of a target with $\lambda>0$, so these zeros measure the restrictiveness of the fixing rather than the difficulty of the search. A \emph{partial} regime, formed by the Petersen graph and the triangular graph $T_5$, yields a small but strictly positive number of SRG candidates, namely $376$ verified Petersen labelings, $24$ of them distinct, and $300$ verified $T_5$ labelings, $214$ of them distinct, per $10^5$ reads. Finally, an \emph{unreached} regime, comprising the Clebsch graph, the triangular graph $T_6$, and the complements of the Clebsch graph and of $L_2(4)$, yields zero SRGs for genuinely empirical reasons, even though the Hoffman filter often retains almost the entire Stage~I universe, above $99.7\%$ for $T_6$ and the complement of $L_2(4)$. The excluded and unreached regimes together quantify the practical severity of the fundamental obstruction established in Sec.~\ref{sec:noqubo} and provide the direct experimental motivation for the auxiliary-variable formulation developed in Sec.~\ref{sec:full-qubo}, in which the $(\lambda,\mu)$ conditions are enforced at the level of the Hamiltonian rather than relegated to a downstream filter; the deeper structural reason is that the Hoffman bound, by construction, cannot encode the constant-$\mu$ requirement, so enforcing it at the Hamiltonian level is the natural next step. Section~\ref{subsubsec:sa-pipeline} shows, in addition, that the excluded families become reachable once the fixing is upgraded to the SRG-aware canonical prescription of Appendix~\ref{app:srg-reduction}.

\begin{figure}
    \centering
    \includegraphics[width=\linewidth]{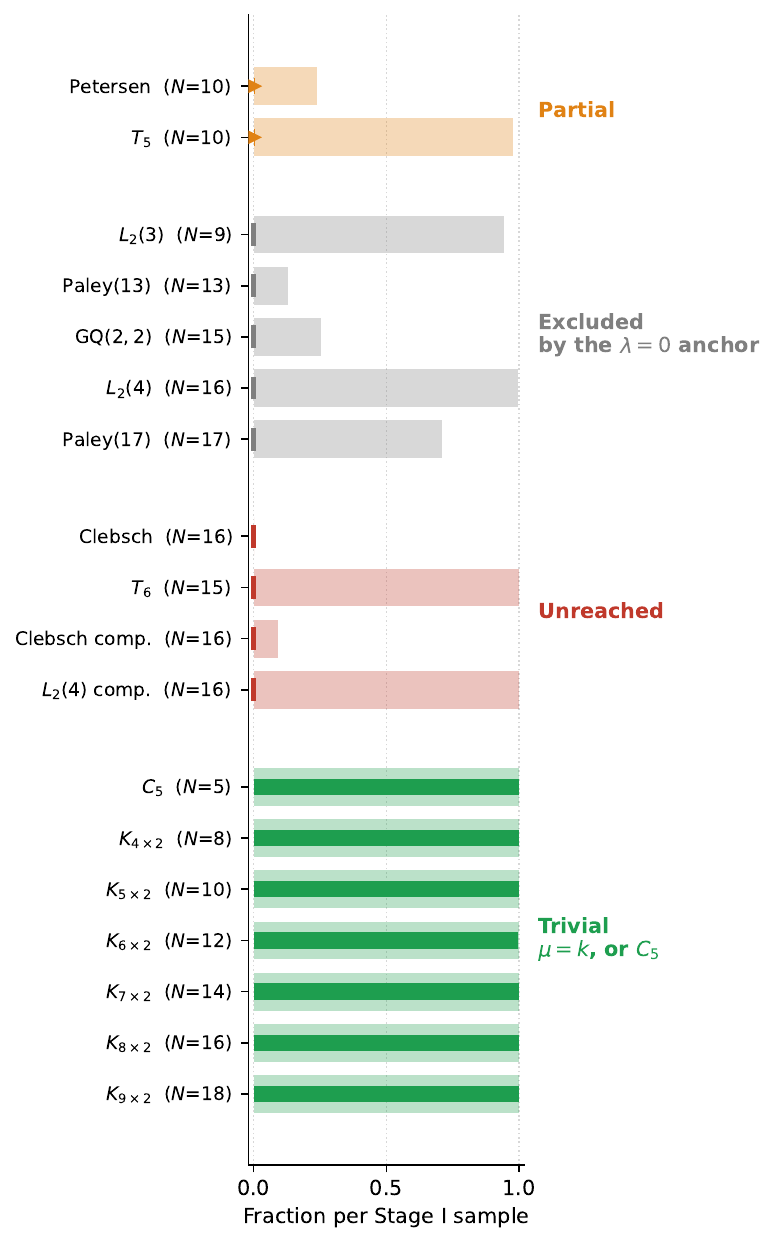}
    \caption{\justifying Throughput survey of the divided pipeline across the 18 feasible SRG families with $N \le 18$, at $10^5$ simulated-annealing reads per family on the Stage~I Hamiltonian $H_{\mathrm{gen}}$ with the fixed-row prescription of Sec.~\ref{subsec:fixed-row}. For each family, the wide, lightly shaded outer bar is the fraction of Stage~I candidates that pass the Hoffman filter, and the narrow, saturated inner bar is the end-to-end SRG yield per read, both drawn in the color of the corresponding regime; a vertical tick at zero marks a vanishing yield, and a small arrowhead marks a yield too small to be visible on this scale. The four regimes, trivial, excluded, partial and unreached, are characterized in the main text.}
    \label{fig:pipeline}
\end{figure}

\subsubsection{SA-based pipeline throughput and structure of false positives}
\label{subsubsec:sa-pipeline}

The survey of Fig.~\ref{fig:pipeline} characterizes the pipeline under the family-agnostic fixed-row prescription. To measure what the pipeline achieves once the fixing is made SRG-aware, we report the end-to-end throughput under simulated annealing on the Stage~I Hamiltonian $H_{\mathrm{gen}}$, now with the general-$\lambda$ canonical fixing of Sec.~\ref{app:general-case} applied, for six representative targets that span the regimes identified by Fig.~\ref{fig:pipeline} and overlap with the four targets characterized for the auxiliary-variable formulation in Sec.~\ref{sec:full-qubo-exp}. The first three rows of Table~\ref{tab:sa-pipeline} cover the partial and excluded regimes of Fig.~\ref{fig:pipeline} at small $N$, namely the Petersen graph, the triangular graph $T_5$, and $L_2(3)$, the latter isomorphic to the Paley graph on nine vertices. The last three rows are common with the auxiliary-QUBO benchmark, namely Paley$(13)$, the triangular graph $T_6$, and the Clebsch graph, enabling a direct empirical comparison between the two formulations on the same targets. For each target the SA returns $n_{\mathrm{reads}}=10^4$ candidate configurations on $H_{\mathrm{gen}}$; each returned sample is reconstructed into an adjacency matrix and tested in turn for $k$-regularity, the integer-floored Hoffman bound of the target family, and the algebraic SRG identity~\eqref{eq:srg-cert}.

\begin{table*}[tbp]
\centering
\caption{\justifying SA-based throughput of the divided pipeline of Sec.~\ref{sec:pipeline} at $n_{\mathrm{reads}}=10^4$ on the Stage~I Hamiltonian $H_{\mathrm{gen}}$ with the general-$\lambda$ canonical fixing of Sec.~\ref{app:general-case}. The column Stage~I counts samples that decode to a $k$-regular graph; Stage~II counts the further samples whose clique number respects the integer-floored Hoffman bound $\lfloor h \rfloor$ of the target family; SRGs verified counts the samples that additionally satisfy the algebraic SRG identity~\eqref{eq:srg-cert}; unique counts the distinct adjacency matrices at the SRG-verified stage and is omitted when only labelings of a single orbit are recovered. The first three rows are small-$N$ controls, with Petersen and $T_5$ in the partial regime of Fig.~\ref{fig:pipeline} and $L_2(3)$ in the excluded regime, where the fixed-row anchor rules out any valid completion. The last three rows are common with the auxiliary-variable benchmark of Sec.~\ref{sec:full-qubo-exp}, reported in Table~\ref{tab:srg-aux-results}, and enable a direct cross-formulation comparison.}
\label{tab:sa-pipeline}
\begin{tabular}{lcccccc}
\hline
Target & $(N,k,\lambda,\mu)$ & $\lfloor h \rfloor$ & Stage~I & Stage~II & SRGs verified & unique \\
\hline
Petersen        & $(10,3,0,1)$ & $2$ & $9998$  & $2290$ & $78$  & $21$ \\
Triangular $T_5$& $(10,6,3,4)$ & $4$ & $9999$  & $9784$ & $58$  & $12$ \\
$L_2(3)$        & $( 9,4,1,2)$ & $3$ & $10000$ & $9759$ & $169$ & $ 4$ \\
\hline
Paley$(13)$     & $(13,6,2,3)$ & $3$ & $10000$ & $1123$ & $0$ & --- \\
Triangular $T_6$& $(15,8,4,4)$ & $5$ & $10000$ & $9977$ & $0$ & --- \\
Clebsch         & $(16,5,0,2)$ & $2$ & $10000$ & $0$    & $0$ & --- \\
\hline
\end{tabular}
\end{table*}

Three observations emerge from the small-$N$ rows of Table~\ref{tab:sa-pipeline}. First, the Stage~I yield under SA is essentially saturated for all targets, with at least $99.98\%$ of reads returning a $k$-regular graph, matching the saturation already observed in the survey of Fig.~\ref{fig:pipeline}; the combinatorially significant work of generating $k$-regular candidates is done by SA on $H_{\mathrm{gen}}$. Second, the conditional Stage~II yield depends sharply on the target. For Petersen, where $\lfloor h \rfloor = 2$ and most non-Petersen $3$-regular graphs on $10$ vertices contain at least one triangle and hence have clique number at least three, the Hoffman filter retains only $22.9\%$ of Stage~I survivors; for $T_5$ and $L_2(3)$, where $\lfloor h \rfloor$ is $4$ and $3$ respectively and clique numbers of arbitrary $k$-regular graphs on the same vertex set typically lie close to or below this bound, the Hoffman filter retains essentially the entire Stage~I universe, above $97\%$ in both cases. The discriminative power of the Hoffman filter is therefore highly $\lambda$-dependent in this regime, and is strongest for triangle-free targets. Third, and most importantly for the empirical positioning of the divided pipeline, the SA-based pipeline reaches the SRG ground state for all three small-$N$ targets within the budget considered, including $L_2(3)$, which is structurally excluded in the survey of Fig.~\ref{fig:pipeline} by the fixed-row anchor. The verified SRG yield is small in absolute terms, at most $1.7\%$ of Stage~I survivors, but strictly positive, and the diversity of recovered configurations, with $21$ distinct labeled adjacency matrices for Petersen, $12$ for $T_5$, and $4$ for $L_2(3)$, confirms that SA does not collapse onto a single labeling but explores the residual orbit of the canonical fixing. Reachability in the divided pipeline is therefore governed by the choice of fixing rather than by the sampler; restoring the general-$\lambda$ canonical fixing turns a family that the fixed-row anchor excludes outright into one solved $169$ times in $10^4$ reads.

The last three rows of Table~\ref{tab:sa-pipeline}, which overlap with the auxiliary-variable benchmark of Sec.~\ref{sec:full-qubo-exp}, expose three structurally distinct failure modes of the divided pipeline beyond the small-$N$ regime. First, for Paley$(13)$ the Hoffman filter retains a modest $11.2\%$ of Stage~I survivors, but none of those survivors satisfy the algebraic SRG identity; the false-positive content of the Hoffman filter saturates the Stage~II yield, and the divided pipeline returns no SRG candidate within the trial budget. Second, for the triangular graph $T_6$ the Hoffman bound $\lfloor h \rfloor = 5$ is consistent with the clique number of essentially any $8$-regular graph on $15$ vertices, so $99.8\%$ of Stage~I survivors pass Stage~II and the filter is effectively trivial; the algebraic certification then rejects all of them. Third, for the Clebsch graph the situation is the opposite extreme. The Hoffman bound $\lfloor h \rfloor = 2$ is so tight that triangle-freeness is required, and the fraction of triangle-free $5$-regular graphs on $16$ vertices among the SA samples is essentially zero, so the Hoffman filter rejects the entire Stage~I universe and no candidate survives to Stage~II. These three failure modes, a strong Hoffman filter saturated by false positives, a trivial Hoffman filter, and a prohibitively tight Hoffman filter, delineate the regime in which the divided pipeline becomes ineffective and motivate the complete formulation of Sec.~\ref{sec:full-qubo} as a complementary route. In the auxiliary-variable benchmark of Table~\ref{tab:srg-aux-results}, Clebsch is recovered with $p_{\mathrm{succ}}\simeq 0.95$ at $n_{\mathrm{reads}}=10^5$ and Paley$(13)$ with $p_{\mathrm{succ}}=0.65$ at $n_{\mathrm{reads}}=2\times 10^5$, precisely on the targets where the divided pipeline fails here. The two formulations are therefore empirically complementary. The divided pipeline is competitive on $\lambda=0$ targets whose Hoffman bound is informative without being prohibitive, of which Petersen is the prototype, while the auxiliary-variable formulation extends to targets whose constraint structure cannot be captured by a spectral filter alone, of which Paley and Clebsch are prototypes. Triangular $T_6$ is the only target on which neither formulation succeeds within the budgets considered here, consistent with the structural argument given in Sec.~\ref{sec:discussion}.

\paragraph*{Structure of the false positives.}
For each target we examined the Hoffman-passing non-SRG candidates returned by Stage~II, sampling fifty false positives per target, and computed their common-neighbor statistics. For the Petersen target, all false positives are $3$-regular triangle-free graphs, so the $\lambda=0$ condition is automatically satisfied and $\operatorname{var}(c_{i,j}\,|\,\text{adjacent})=0$, but the common-neighbor count on non-adjacent pairs deviates from the target $\mu=1$; pooled across the fifty false positives, $60.8\%$ of non-adjacent pairs have $c_{i,j}=1$, while $19.6\%$ have $c_{i,j}=0$ and $19.6\%$ have $c_{i,j}=2$. The $5$-prism graph used in Table~\ref{tab:validation} as a sharp non-SRG control is therefore not an isolated example; the entire false-positive pool for the Petersen target consists of triangle-free $3$-regular graphs that fail only the constant-$\mu$ condition, with $c_{i,j}$ varying around the target value. For the $T_5$ and $L_2(3)$ targets the failure modes are more diverse, since the adjacent and non-adjacent common-neighbor counts both vary, with $\operatorname{var}(c_{i,j}\,|\,\text{adjacent})\simeq 0.4$ and $\operatorname{var}(c_{i,j}\,|\,\text{non-adjacent})$ between $0.1$ and $0.4$, so the false positives simultaneously violate both the $\lambda$- and $\mu$-conditions. For Paley$(13)$ and $T_6$ the false-positive pool exhibits the broadest spread of $c_{i,j}$ values; the non-adjacent common-neighbor counts range over five or six distinct integer values around the target $\mu$, with the modal bin at $\mu$ accounting for less than half of the pairs, namely $46.3\%$ for Paley$(13)$ and $37.9\%$ for $T_6$. The Hoffman filter therefore acts as a strong filter for $\lambda=0$ targets, where it tightly bounds the clique number and forces triangle-freeness, and as a weaker filter for $\lambda>0$ targets, where the clique-number bound is consistent with a broad range of common-neighbor structures. In both regimes, however, the final algebraic certification of Sec.~\ref{subsubsec:cert} is essential.

\section{Solving the full problem with auxiliary variables}
\label{sec:full-qubo}

Theorem~\ref{thm:no-aux} leaves a single route towards an exact formulation, namely enlarging the variable space. The empirical findings of Sec.~\ref{sec:pipeline} point in the same direction. As shown in Fig.~\ref{fig:pipeline}, the divided strategy is fully effective only over a restricted region of parameter space. For $9$ of the $18$ algebraically feasible targets with $N \le 18$ that we surveyed, the pipeline returns no SRG candidate within $10^5$ Stage~I reads, five of those failures being structural exclusions imposed by the fixed-row anchor and four genuine search failures; and of the nine families where it does succeed, seven are the degenerate ones whose success is guaranteed by Corollary~\ref{cor:no-aux-class}, so that the survey genuinely conquers only the Petersen graph and the triangular graph $T_5$. In this section we show that, at the price of a controlled polynomial blow-up in the number of binary variables, the entire SRG problem can be cast as a single QUBO instance, with no further heuristic postselection beyond a polynomial-time algebraic check on the QUBO output. This formulation is valuable for several reasons. First, a complete QUBO encoding provides a self-contained benchmark target for classical and quantum-inspired QUBO solvers \cite{lucas2014,glover2018,pawlowski2025veloxq} as well as for quantum annealers \cite{kadowaki1998}. Second, it exposes the algebraic structure of the SRG constraints in a form that is directly amenable to Ising-model implementations and therefore fits naturally within the same theoretical framework used in earlier QUBO-based studies of combinatorial graph problems on quantum hardware \cite{bian2014,chapuis2019,Jain_2021,Grant_Humble_Stump_2021}. Third, it offers a concrete instance of how the standard technique of \emph{quadratization} of higher-order pseudo-Boolean functions \cite{rosenberg1975,boros_hammer2002,ishikawa2011,anthony2017,dattani2019} can be deployed in a combinatorial graph-theoretic setting of independent interest. The auxiliary variables are not a circumstantial artifact of our construction; by Theorem~\ref{thm:no-aux} they cannot be avoided by any exact quadratic formulation, however ingenious.

\subsection{Quadratization via Rosenberg auxiliary variables}

The systematic reduction of a pseudo-Boolean function of degree larger than two to a quadratic one by introducing auxiliary binary variables is known as quadratization \cite{rosenberg1975,boros_hammer2002,ishikawa2011,anthony2017,dattani2019}. Our starting point is the quartic penalty $H_{\lambda\mu}$ of Eq.~\eqref{eq:Hlm}, whose quarticity originates in the products $x_{i,\ell}\,x_{j,\ell}$ appearing squared inside the common-neighbor counts of Eq.~\eqref{eq:common-neighbors}. The strategy is to give each such product a name. For each ordered triple $(i,j,\ell)$ with $0 \le i<j \le N-1$ and $\ell \notin \{i,j\}$, we introduce an auxiliary binary variable $y_{i,j,\ell}$ constrained to represent the product
\begin{equation}\label{eq:aux-def}
y_{i,j,\ell} \;=\; x_{i,\ell}\, x_{j,\ell}.
\end{equation}
The identity \eqref{eq:aux-def} can be enforced at the level of the Hamiltonian through Rosenberg's quadratization penalty \cite{rosenberg1975,boros_hammer2002},
\begin{equation}\label{eq:rosenberg}
P(x_1, x_2, y) \;=\; 3\, y \;+\; x_1 x_2 \;-\; 2\, x_1 y \;-\; 2\, x_2 y,
\end{equation}
which is manifestly quadratic in $(x_1,x_2,y)$ and has the following property. A direct inspection of the eight truth-table assignments of $(x_1,x_2,y)\in\{0,1\}^3$ shows that $P=0$ if and only if $y=x_1 x_2$, and $P\ge 1$ otherwise. Applying \eqref{eq:rosenberg} to every triple $(i,j,\ell)$ yields the auxiliary penalty
\begin{equation}\label{eq:Haux}
H_{\text{aux}} \;=\; \sum_{\substack{0\le i<j\le N-1 \\ \ell \neq i,j}} P\big(x_{i,\ell},\, x_{j,\ell},\, y_{i,j,\ell}\big),
\end{equation}
which is quadratic in the extended variable set $\{x_{i,j}\} \cup \{y_{i,j,\ell}\}$. Under this change of variables, the common-neighbor count \eqref{eq:common-neighbors} becomes linear,
\begin{equation}\label{eq:cij-aux}
c_{i,j} \;=\; \sum_{\substack{\ell=0 \\ \ell \neq i,j}}^{N-1} y_{i,j,\ell},
\end{equation}
and the SRG penalty \eqref{eq:Hlm} reduces to the quadratic expression
\begin{equation}\label{eq:Hlm-aux}
H_{\lambda\mu}^{\mathrm{(q)}} \;=\; \sum_{0\le i < j \le N-1} \!\!\Bigg(\, \sum_{\substack{\ell=0 \\ \ell \neq i,j}}^{N-1} y_{i,j,\ell} \;-\; \mu \;-\; (\lambda-\mu)\, x_{i,j} \,\Bigg)^{\!2}.
\end{equation}
Equations~\eqref{eq:Haux} and~\eqref{eq:Hlm-aux} together eliminate all terms of degree higher than two from the original HUBO Hamiltonian.

\subsection{The full SRG Hamiltonian}

Combining the $k$-regularity penalty $H_{\text{gen}}$ of Eq.~\eqref{eq:Hgen}, the common-neighbor penalty $H_{\lambda\mu}^{\mathrm{(q)}}$, and the quadratization penalty $H_{\text{aux}}$, the complete QUBO Hamiltonian for the SRG problem with parameters $(N,k,\lambda,\mu)$ reads
\begin{equation}\label{eq:H-SRG}
H_{\mathrm{SRG}} \;=\; w_1\, H_{\text{gen}} \;+\; w_2\, H_{\lambda\mu}^{\mathrm{(q)}} \;+\; w_3\, H_{\text{aux}},
\end{equation}
with positive multipliers $w_1, w_2, w_3 > 0$. These weights have to be chosen so that no violation of the underlying constraints can lower the total energy below a configuration that satisfies all of them simultaneously. Following the standard prescription for constrained pseudo-Boolean problems \cite{lucas2014,glover2018}, $w_3$ must dominate the couplings between each $y_{i,j,\ell}$ and the rest of the Hamiltonian, so that every auxiliary variable locks onto its defining product~\eqref{eq:aux-def} at the ground state; $w_1$ and $w_2$ are then set large enough that the degree and common-neighbor constraints cannot be simultaneously violated at lower cost than satisfied.

A worst-case lower bound on $w_3$ that guarantees auxiliary lock at every ground state can be derived explicitly. Inspection of the Rosenberg polynomial \eqref{eq:rosenberg} shows that every unlocked triple $(x_{i,\ell},x_{j,\ell},y_{i,j,\ell})$ contributes at least $w_3$ to the energy. Flipping a single $y_{i,j,\ell}$ shifts the common-neighbor count $c_{i,j}$ of Eq.~\eqref{eq:cij-aux} by $\pm 1$, so the maximum reduction in the squared penalty $w_2 (c_{i,j}-\mu-(\lambda-\mu)x_{i,j})^2$ achievable by a single flip is bounded by $w_2 (2\Delta-1)$, where $\Delta$ is the largest possible signed deviation of $c_{i,j}$ from its target. Since $0 \le c_{i,j} \le N-2$, one has $\Delta \le N-2$ and the sufficient condition
\begin{equation}\label{eq:A3-bound}
w_3 \;\ge\; w_2\,(2N-5)
\end{equation}
guarantees that no single-flip move reduces the total energy by unlocking an auxiliary.

A complementary, much tighter bound is obtained by restricting attention to configurations that are at most one step away from the SRG ground state, which are precisely the configurations SA visits with high probability in the late stages of annealing. At the SRG ground state itself, all common-neighbor constraints are saturated, with $\Delta = 0$ for every pair, so unlocking a single auxiliary necessarily \emph{increases} $H_{\lambda\mu}^{\mathrm{(q)}}$ by $w_2$ and incurs the Rosenberg penalty $w_3 \ge 1$; the lock is automatic for any $w_3 > 0$. At a single-step neighbor of the ground state, with one constraint violated by $\Delta = 1$, the maximum savings from unlocking an auxiliary to restore the constraint is $w_2(2\Delta-1) = w_2$, so the sufficient condition for the lock to dominate becomes simply
\begin{equation}\label{eq:A3-bound-tight}
w_3 \;\gtrsim\; w_2.
\end{equation}
Empirically, as reported in Sec.~\ref{sec:full-qubo-exp}, this near-ground-state bound is consistent with the optimum $w_3 = w_2 = 1$ identified on the experimental grid. At this choice the lock is essentially enforced, with a mean unlocked count of at most $7.2$ out of $758$ auxiliaries, roughly one percent, for the targets surveyed, while the energy landscape remains permissive enough for SA to navigate. The worst-case bound \eqref{eq:A3-bound} is therefore overly conservative for the typical SA trajectory toward the SRG ground state, and the experimental recipe $w_1 = w_2 = w_3 = 1$ should be read as the saturation of the tighter near-ground-state estimate \eqref{eq:A3-bound-tight}.

By construction, the global minimizers of $H_{\mathrm{SRG}}$, projected onto the original edge variables $\{x_{i,j}\}$, are in one-to-one correspondence with the adjacency matrices of strongly regular graphs with parameters $(N,k,\lambda,\mu)$. The whole problem therefore reduces to a single QUBO minimization whose ground states encode the target SRGs; a trivial $O(N^3)$ algebraic verification, step~9 of Algorithm~\ref{alg:srg-aux}, is sufficient to certify that a returned configuration is a genuine ground state rather than a low-lying excited state. A substantial reduction in the number of free binary variables can be obtained by anchoring three rows of the adjacency matrix according to the canonical fixing detailed in Appendix~\ref{app:srg-reduction}, an SRG-aware extension of the fixed-row strategy of Sec.~\ref{subsec:fixed-row} that exploits the parameters $(\lambda,\mu)$ to fix additional entries of $A$ beyond rows~$0$ and~$1$. The implementation of this canonical fixing in the auxiliary-variable QUBO, including the propagation of fixed entries to the auxiliary variables $\{y_{i,j,\ell}\}$ and the resulting variable count, is described in Appendix~\ref{app:srg-reduction-qubo}.

\begin{algorithm}
\SetAlgoLined
\DontPrintSemicolon
\KwIn{SRG parameters $(N,k,\lambda,\mu)$, penalty weights $w_1, w_2, w_3 > 0$}
\KwOut{Adjacency matrix of an SRG with parameters $(N,k,\lambda,\mu)$, or fail}
\BlankLine
\textbf{Apply canonical fixing} following Appendix~\ref{app:srg-reduction}:\;
\quad Fix entries of rows $0$, $1$ and $k+1$ of $A$ via Eqs.~(\ref{eq:row0})--(\ref{eq:rowk1-general}) for general $\lambda$, or Eqs.~(\ref{eq:row0-l0})--(\ref{eq:rowk1-l0}) when $\lambda=0$.\;
\quad \textbf{Propagate} the fixings to the auxiliary variables: for every triple $(i,j,\ell)$ such that both $x_{i,\ell}$ and $x_{j,\ell}$ are fixed, set $y_{i,j,\ell} \leftarrow x_{i,\ell}\, x_{j,\ell}$.\;
\BlankLine
\textbf{Define} the remaining edge variables $\{x_{i,j}\}$ and auxiliary variables $\{y_{i,j,\ell}\}$ as binary unknowns.\;
\BlankLine
\textbf{Construct} the SRG Hamiltonian:\;
\[
H_{\mathrm{SRG}} \;=\; w_1\, H_{\mathrm{gen}} \;+\; w_2\, H^{(q)}_{\lambda\mu} \;+\; w_3\, H_{\mathrm{aux}}
\]
\quad with $H_{\mathrm{gen}}$, $H^{(q)}_{\lambda\mu}$ and $H_{\mathrm{aux}}$ given by Eqs.~\eqref{eq:Hgen}, \eqref{eq:Hlm-aux} and \eqref{eq:Haux}.\;
\BlankLine
\textbf{Minimize} $H_{\mathrm{SRG}}$ using a classical or quantum QUBO solver, yielding $\{x_{i,j}^\star\}$ and $\{y_{i,j,\ell}^\star\}$.\;
\BlankLine
\textbf{Reconstruct} the adjacency matrix $A$ from the canonical fixings and the optimized free variables $\{x_{i,j}^\star\}$.\;
\BlankLine
\textbf{Verify} the SRG conditions: $A\mathbf{1}=k\mathbf{1}$ and $A^2 = (\lambda-\mu)A + (k-\mu)\mat{I} + \mu\mat{J}$.\;
\BlankLine
\If{both conditions hold}{
    \Return{$A$}\;
}
\Else{
    \Return{No valid SRG found}\;
}
\caption{SRG Construction via Auxiliary-Variable QUBO}
\label{alg:srg-aux}
\end{algorithm}

\subsection{Variable counting and practical scaling}

The benefit of a fully QUBO encoding comes at the cost of a polynomial increase in the number of binary variables. The set of edge variables has cardinality $N(N-1)/2$, whereas the set of auxiliary variables $\{y_{i,j,\ell}\}$ indexed by $0 \le i<j \le N-1$ and $\ell \notin \{i,j\}$ has cardinality
\begin{equation}\label{eq:aux-count}
\binom{N}{2}\,(N-2) \;=\; \frac{N(N-1)(N-2)}{2}.
\end{equation}
The counts in this subsection refer to the QUBO before any canonical fixing is applied; the corresponding counts after the canonical fixing of Appendix~\ref{app:srg-reduction} are reported in Sec.~\ref{app:srg-reduction-qubo}, where the number of remaining edge variables $\varepsilon$ is given by Eqs.~\eqref{eq:F-general}--\eqref{eq:F-lambda0} and the number of remaining auxiliary variables $\alpha$ by Eq.~\eqref{eq:aux-free-count}.
The total number of binary variables therefore grows as $\Theta(N^3)$, in contrast with the $\Theta(N^2)$ scaling of the Stage~I generation QUBO of Sec.~\ref{sec:pipeline}. For the $99$-vertex target of Conway's problem, the count evaluates to roughly $4.7\times 10^{5}$ binary variables before canonical fixing, which quantifies the entrance fee that Theorem~\ref{thm:no-aux} shows to be unavoidable. The quadratic couplings present in $H_{\mathrm{SRG}}$ are sparse but numerous. Each auxiliary variable is coupled to two edge variables through \eqref{eq:Haux}, and each pair $(i,j)$ contributes an $(N-2)\times(N-2)$ block of quadratic couplings among the $y_{i,j,\ell}$ arising from the expansion of \eqref{eq:Hlm-aux}. The resulting interaction graph is therefore denser than that of Stage~I, which directly impacts the minor-embedding overhead on sparsely connected quantum annealing architectures \cite{boothby2016,denchev2016,Grant_Humble_Stump_2021}.

\subsection{Empirical convergence under simulated annealing}
\label{sec:full-qubo-exp}

To probe the practical reach of this formulation in the regime where the divided pipeline of Sec.~\ref{sec:pipeline} is empirically blocked at the Hoffman filter, we have characterized its convergence on four representative SRG targets covering both canonical-fixing branches by simulated annealing, with the canonical fixing of Appendix~\ref{app:srg-reduction} applied to $H_{\mathrm{SRG}}$ in each case. For each target, the ground-state manifold of the canonically fixed Hamiltonian consists of those labeled adjacency matrices of the target SRG, unique up to isomorphism in the four cases considered, that are compatible with the prescribed fixings of rows $0$, $1$ and $k+1$. The residual relabeling freedom is the subgroup of $\mathrm{Aut}(G_{\mathrm{target}})$ that preserves the canonical anchor, namely pointwise fixation of vertex $0$, setwise fixation of its neighborhood $\mathcal{L}_1$ with vertex $1$ also pointwise fixed, and pointwise fixation of vertex $k+1$ in $\mathcal{L}_2$. The success rate $p_{\mathrm{succ}}$ reported below is per-run and counts a run as successful whenever simulated annealing recovers any element of this orbit. Convergence is assessed through the residual violation
\begin{equation}\label{eq:T-min}
\tau_{\min} \;=\; \tau_{\mathrm{reg}} \,+\, \tau_\lambda \,+\, \tau_\mu,
\end{equation}
where $\tau_{\mathrm{reg}} = \sum_i (\deg_i - k)^2$ counts squared $k$-regularity violations and $\tau_\lambda, \tau_\mu$ count squared violations of the $\lambda$- and $\mu$-conditions on the reconstructed adjacency matrix. A converged ground state corresponds to $\tau_{\min} = 0$. We report the empirical success rate $p_{\mathrm{succ}}$, defined as the fraction of independent repetitions reaching $\tau_{\min}=0$, with $95\%$ confidence intervals computed by the Wilson method \cite{wilson1927}. Throughout, point estimates of $p_{\mathrm{succ}}$ should be read together with their Wilson intervals, since the modest sample size $n_{\mathrm{reps}}=20$ adopted here yields intervals of non-negligible width even at the saturated endpoints; for instance, a point estimate of $1.00$ is consistent with a true success rate as low as $0.84$ at the $95\%$ level.

\begin{table}[tbp]
\centering
\caption{\justifying Empirical success rate $p_{\mathrm{succ}}$ of the auxiliary-variable QUBO of Sec.~\ref{sec:full-qubo} on four SRG targets, two in the $\lambda=0$ canonical-fixing branch, Petersen and Clebsch, and two in the general-$\lambda$ branch, Paley$(13)$ and the triangular graph $T_6$, as a function of the simulated-annealing budget $n_{\mathrm{reads}}$, with the canonical fixing of Appendix~\ref{app:srg-reduction} applied and $w_1=w_2=w_3=1$. Each entry is computed over $n_{\mathrm{reps}}=20$ independent repetitions; the $95\%$ confidence interval is the Wilson interval for a binomial proportion. The column of free variables lists the size of the reduced QUBO after canonical fixing has been substituted into Eq.~\eqref{eq:H-SRG}.}
\label{tab:srg-aux-results}
\begin{tabular}{lccccc}
\hline
SRG & $(N,k,\lambda,\mu)$ & free vars & $n_{\mathrm{reads}}$ & $p_{\mathrm{succ}}$ & $95\%$ CI \\
\hline
Petersen     & $(10,3,0,1)$ & $252$  & $10^4$            & $\mathbf{1.00}$ & $[0.84,\,1.00]$ \\
Petersen     & $(10,3,0,1)$ & $252$  & $5\!\times\!10^4$ & $\mathbf{1.00}$ & $[0.84,\,1.00]$ \\
\hline
Paley$(13)$  & $(13,6,2,3)$ & $816$  & $10^4$            & $0.00$ & $[0.00,\,0.16]$ \\
Paley$(13)$  & $(13,6,2,3)$ & $816$  & $3\!\times\!10^4$ & $0.15$ & $[0.05,\,0.36]$ \\
Paley$(13)$  & $(13,6,2,3)$ & $816$  & $5\!\times\!10^4$ & $0.20$ & $[0.08,\,0.42]$ \\
Paley$(13)$  & $(13,6,2,3)$ & $816$  & $10^5$            & $0.45$ & $[0.26,\,0.66]$ \\
Paley$(13)$  & $(13,6,2,3)$ & $816$  & $2\!\times\!10^5$ & $\mathbf{0.65}$ & $[0.43,\,0.82]$ \\
\hline
Triangular $T_6$ & $(15,8,4,4)$ & $1317$ & $10^4$            & $0.00$ & $[0.00,\,0.16]$ \\
Triangular $T_6$ & $(15,8,4,4)$ & $1317$ & $5\!\times\!10^4$ & $0.00$ & $[0.00,\,0.16]$ \\
\hline
Clebsch     & $(16,5,0,2)$ & $1404$ & $10^4$            & $0.10$ & $[0.03,\,0.30]$ \\
Clebsch     & $(16,5,0,2)$ & $1404$ & $3\!\times\!10^4$ & $0.55$ & $[0.34,\,0.74]$ \\
Clebsch     & $(16,5,0,2)$ & $1404$ & $5\!\times\!10^4$ & $0.85$ & $[0.64,\,0.95]$ \\
Clebsch     & $(16,5,0,2)$ & $1404$ & $10^5$            & $0.95$ & $[0.76,\,0.99]$ \\
Clebsch     & $(16,5,0,2)$ & $1404$ & $2\!\times\!10^5$ & $\mathbf{0.95}$ & $[0.76,\,0.99]$ \\
\hline
\end{tabular}
\end{table}

\begin{figure}
    \centering
    \includegraphics[width=0.9\linewidth]{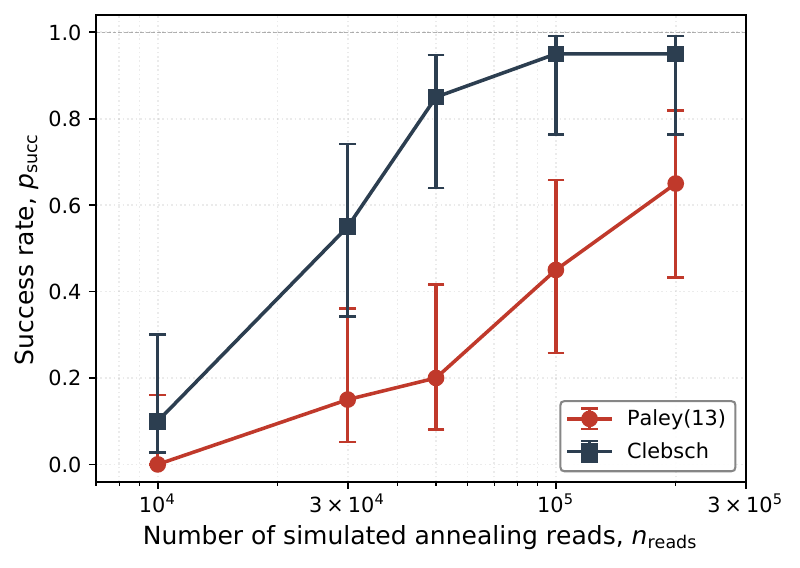}
    \caption{\justifying Empirical success rate $p_{\mathrm{succ}}$ of the auxiliary-variable QUBO of Sec.~\ref{sec:full-qubo} as a function of the simulated-annealing budget $n_{\mathrm{reads}}$, for Paley$(13)$, red circles, and the Clebsch graph, dark squares. Each point is averaged over $20$ independent repetitions; error bars are Wilson $95\%$ confidence intervals for a binomial proportion. Both curves grow monotonically with $n_{\mathrm{reads}}$; Clebsch saturates at $p_{\mathrm{succ}}\simeq 0.95$ for $n_{\mathrm{reads}} \ge 10^5$, while Paley$(13)$ reaches $0.65$ at $n_{\mathrm{reads}}=2\times 10^5$ and has not yet saturated. The behaviour of Petersen and triangular $T_6$, reported in Table~\ref{tab:srg-aux-results}, is not shown here; Petersen saturates at $p_{\mathrm{succ}}=1$ already at $n_{\mathrm{reads}}=10^4$, and $T_6$ produces no success in $40$ runs across both probed budgets.}
    \label{fig:psuccess}
\end{figure}

The empirical success rate of the auxiliary-variable QUBO on the four targets is summarized in Table~\ref{tab:srg-aux-results} and Fig.~\ref{fig:psuccess}. Three qualitative features emerge. First, $p_{\mathrm{succ}}$ increases monotonically with $n_{\mathrm{reads}}$ for Paley$(13)$ and Clebsch, and saturates near unity for Clebsch at $n_{\mathrm{reads}} \ge 10^5$. The convergence behavior of the formulation is therefore governed by the sampling budget rather than by a structural barrier. With sufficiently many reads, simulated annealing recovers the ground state, even though the absolute budget required grows steeply with the size of the reduced QUBO. Second, the four targets cluster cleanly into two regimes set by the canonical-fixing branch. The two $\lambda=0$ targets, Petersen and Clebsch, converge readily, with Petersen reaching $p_{\mathrm{succ}}=1$ already at $n_{\mathrm{reads}}=10^4$ and Clebsch saturating at $p_{\mathrm{succ}}\simeq 0.95$ at $n_{\mathrm{reads}}=10^5$, whereas the two general-$\lambda$ targets behave very differently, since Paley$(13)$ reaches $p_{\mathrm{succ}}=0.65$ only at $n_{\mathrm{reads}}=2\times 10^5$ and triangular $T_6$ fails to produce any ground state in $40$ independent runs at $n_{\mathrm{reads}} \le 5\times 10^4$. Third, the branch separation is not explained by the reduced-QUBO size alone. Clebsch, with $1404$ free variables, converges more readily than Paley$(13)$, with $816$, at every $n_{\mathrm{reads}}$, and Petersen, with $252$, outperforms $T_6$, with $1317$. The refined fixing of Sec.~\ref{app:lambda-zero-case} eliminates strictly more entries of the adjacency matrix than the general prescription of Sec.~\ref{app:general-case} and forces those additional entries to zero specifically, matching the low edge density of the target SRG and producing an energy landscape with a substantially larger basin of attraction around the SRG ground state.

The sensitivity of the formulation to the auxiliary-coupling weight $w_3$, which controls the strength of the Rosenberg quadratization penalty in Eq.~\eqref{eq:Haux}, exhibits a sharp optimum at $w_3=1$ on the one-dimensional sweep with $w_1=w_2=1$ explored here. A sweep over $w_3 \in \{0.5, 1.0, 1.5, 2.0, 5.0\}$ at $n_{\mathrm{reads}}=10^5$, with $20$ independent repetitions per point and per target, yields $p_{\mathrm{succ}} = 0.05,\,0.80,\,0.55,\,0.15,\,0.00$ for Clebsch and $p_{\mathrm{succ}} = 0.05,\,0.45,\,0.15,\,0.00,\,0.00$ for Paley$(13)$. The inverted-U dependence on $w_3$ along this one-dimensional cut is sharply resolved, with a unique optimum at $w_3=1$ for both targets and a monotonically decreasing trend on both sides of it on the grid examined. The mechanism is transparent in the auxiliary-lock diagnostic. At $w_3=0.5$, the mean number of auxiliary variables $y_{i,j,\ell}$ deviating from $x_{i,\ell}x_{j,\ell}$ at the returned ground state is $22.6$ out of $1326$ for Clebsch and $15.6$ out of $758$ for Paley$(13)$, indicating that the quadratization identity is partially broken and the optimization is effectively decoupled from the SRG conditions. At $w_3=1$ the lock is sharply enforced, and the mean unlocked count drops to $1.9$ out of $1326$ for Clebsch and $7.2$ out of $758$ for Paley$(13)$. For $w_3 \ge 1.5$ all auxiliaries lock onto their defining products to within $\sim 0.2$ unlocked on average, but the over-constrained landscape sharply degrades the search, so that the success rate decreases monotonically with $w_3$ from this point onwards and reaches zero by $w_3=5$ on both targets. The value $w_3=1$ is the unique choice in the sweep at which the quadratization penalty is strong enough to enforce auxiliary locking on a substantial fraction of runs while remaining permissive enough for simulated annealing to escape local minima. We do not claim optimality over the full three-parameter penalty landscape or on a finer grid; the empirical optimum $w_3=1$ should be understood relative to the choices $w_1=w_2=1$ and the discrete grid on which it was identified.

\paragraph*{Experimental conditions for Table~\ref{tab:srg-aux-results} and Fig.~\ref{fig:psuccess}.}
The auxiliary-variable QUBO experiments were performed on the same $64$-core, $128$~GB DDR5 workstation, using the \texttt{neal} simulated annealing sampler of the D-Wave Ocean SDK with $w_1=w_2=w_3=1$ unless otherwise stated. Each run is an independent simulated annealing trajectory on $H_{\mathrm{SRG}}$ with the canonical fixing of Appendix~\ref{app:srg-reduction} applied, initialized from the default random state of the \texttt{neal} sampler, at one of the budgets $n_{\mathrm{reads}}\in\{10^4,3\times 10^4,5\times 10^4,10^5,2\times 10^5\}$ for the Paley$(13)$ and Clebsch success-rate curves, at $n_{\mathrm{reads}}\in\{10^4,5\times 10^4\}$ for the Petersen and triangular $T_6$ rows, and at $n_{\mathrm{reads}}=10^5$ for the $w_3$ sweep over $\{0.5,1.0,1.5,2.0,5.0\}$. The stopping criterion is the default cooling schedule of the sampler. Each data point is the empirical success rate over $20$ independent repetitions; reported confidence intervals are Wilson $95\%$ binomial intervals \cite{wilson1927}. A run is counted as a success if and only if $\tau_{\min}=0$ on the adjacency matrix reconstructed from the optimized edge variables $\{x_{i,j}^\star\}$ via the canonical fixing of Appendix~\ref{app:srg-reduction}, which by construction is necessary and sufficient for the reconstructed graph to satisfy the algebraic SRG identity~\eqref{eq:srg-cert}. The criterion is evaluated on the edge variables alone and is therefore independent of whether the auxiliary variables $\{y_{i,j,\ell}^\star\}$ have locked onto their defining products. The residual coincides with the value of $H_{\mathrm{SRG}}$ at the returned configuration up to a constant whenever the auxiliary lock is perfect; partial auxiliary unlocking, observed at $w_3=0.5$ as discussed above, does not invalidate the criterion, since the projected adjacency is tested directly. The full characterization for the four targets of Table~\ref{tab:srg-aux-results} and the $w_3$ sweep comprises $780$ independent runs and completed in approximately $3.5$~hours of wall-clock time at $32$-way parallelism.

We therefore regard the complete QUBO formulation of Eq.~\eqref{eq:H-SRG} as the exact counterpart, mandated by Theorem~\ref{thm:no-aux}, of the heuristic pipeline of Sec.~\ref{sec:pipeline}, rather than as its computational competitor on present-generation devices. The two-stage pipeline uses only $\Theta(N^2)$ binary variables at each QUBO step and exploits the algebraic structure of the Hoffman bound as an efficient necessary filter, while the complete formulation enforces the full $(\lambda,\mu)$ conditions at the level of the Hamiltonian and is better suited as a benchmarking and analytical reference. This situation is reminiscent of other combinatorial problems for which quadratization is formally available but generates a prohibitive variable overhead in practice \cite{bian2014,neukart2017,ishikawa2011,anthony2017}.

\section{Discussion and Conclusions}
\label{sec:discussion}

The central result of this work is a sharp delimitation of what quadratic binary optimization can and cannot do for strongly regular graph construction. Theorem~\ref{thm:no-aux} shows that a quadratic function of the edge variables whose global minimizers are exactly the labeled SRGs exists only when every $k$-regular graph is already the target, which happens for the perfect matchings, the cocktail-party graphs and the pentagon, and for no other feasible parameter set. The result covers arbitrary quadratics, symmetric or not, and therefore closes the door on any hope of an ingenious auxiliary-free encoding. In particular, we have formally established that the $99$-vertex graph of Conway's \$1000 problem cannot be obtained, whether or not it exists, as the exact ground-state set of any QUBO over the $4851$ edge variables; to our knowledge this is the first no-go statement of this kind for a prominent open problem in algebraic graph theory. The proof is a short symmetry argument. Any candidate quadratic can be averaged over the vertex-relabeling group without disturbing its minimizers, and the surviving invariant quadratics can only measure degree statistics, which are blind to the common-neighbor structure that distinguishes an SRG.

The two algorithms developed here follow the geometry drawn by the theorem. The pure-QUBO route of Sec.~\ref{sec:pipeline} is exact precisely on the three degenerate families, where minimizing $H_{\mathrm{gen}}$ solves the problem outright, and beyond them it operates as a generate--prune--verify pipeline. Candidate generation and Hoffman-based postselection remain quadratic, the vertex-relabeling degeneracy is tamed by the fixed-row symmetry breaking of Sec.~\ref{subsec:fixed-row}, and the terminal certification against the defining equation~\eqref{eq:srg-cert} is classical and polynomial. Its output is a reduced candidate set rather than the solution, and the certification volume is the resource that grows with $N$. The auxiliary-variable route of Sec.~\ref{sec:full-qubo} pays instead the entrance fee identified by the theorem. Rosenberg quadratization renders the full problem a single QUBO whose global minimizers are in one-to-one correspondence with the target adjacency matrices, at the cost of $\Theta(N^3)$ auxiliary variables, roughly $4.7\times 10^{5}$ for Conway's problem.

The two routes are not only complementary in scope but empirically complementary on the same targets, as the cross-comparison of Tables~\ref{tab:sa-pipeline} and~\ref{tab:srg-aux-results} makes explicit. The pipeline succeeds on Petersen, with $78$ verified SRGs in $10^4$ SA reads on $H_{\mathrm{gen}}$, and on the small-$N$ controls $T_5$ and $L_2(3)$, but fails on Paley$(13)$, on $T_6$ and on Clebsch within the same budget, for the three structurally distinct reasons identified in Sec.~\ref{subsubsec:sa-pipeline}, namely a Hoffman filter saturated by false positives, a trivial Hoffman filter, and a prohibitively tight Hoffman filter. The auxiliary-variable formulation succeeds on Clebsch, with $p_{\mathrm{succ}}\simeq 0.95$ at $n_{\mathrm{reads}}=10^5$, and on Paley$(13)$, with $p_{\mathrm{succ}}=0.65$ at $n_{\mathrm{reads}}=2\times 10^5$, precisely where the pipeline is blocked, while neither route reaches $T_6$ within the budgets considered. A practitioner aiming at a specific SRG should therefore select the route according to the family-specific discriminativeness of the Hoffman bound, which can be assessed in advance from the parameters alone.

The empirical contrast between the two canonical-fixing branches admits a structural reading. The number of free binary edge parameters after the canonical fixing of Appendix~\ref{app:srg-reduction} is $F=(N^2-5N+2k-2\mu+6)/2$ for general $\lambda$ and $F_0=(N^2-7N+12)/2$ in the triangle-free branch, as established in Observations~\ref{obs:count-general} and~\ref{obs:count-lambda0}. The reduction $F-F_0=N+k-\mu-3$ counts the additional entries eliminated by the $\lambda=0$ prescription, and these are not generic removals but forced zeros that align the canonical adjacency with the sparse background of a low-density SRG, producing a substantially larger basin of attraction around the ground state than the variable count alone would suggest. The ordering observed in Table~\ref{tab:srg-aux-results}, where both $\lambda=0$ targets converge at smaller budgets than both general-$\lambda$ targets even though Clebsch has the largest reduced search space of the four, is consistent with this geometric picture. Within the general-$\lambda$ branch, the failure of $T_6$ relative to Paley$(13)$ is likewise structural. The coincidence $\lambda=\mu=4$ degrades the discriminative slope of the linear relation~\eqref{eq:srg-condition} that separates edges from non-edges, and the higher density of active constraints densifies the interaction graph and flattens the landscape, both effects acting against simulated annealing within a fixed schedule.

Beyond the theorem, the contribution of this work is architectural. We provide the first QUBO treatments of SRG construction, organized as the only two consistent responses to the impossibility result, together with the canonical labelings of Appendix~\ref{app:srg-reduction} that substantially reduce the number of free parameters in both routes, and the feasibility table of Appendix~\ref{app:feasibility}, which lists the $33$ admissible parameter quadruples for $N\le 49$ and serves as the input lookup for instantiating either formulation.

We close with a remark on deployment. Both formulations are solver-agnostic QUBO instances, and their most immediate prospects lie in the rapidly advancing ecosystem of classical and quantum-inspired QUBO technology. Modern high-performance solvers such as VeloxQ \cite{pawlowski2025veloxq} routinely process instances several orders of magnitude larger than the reduced QUBOs characterized in Sec.~\ref{sec:full-qubo-exp}, and tensor-network methods now simulate quantum optimization hardware at scales that were until recently out of reach \cite{kshetrimayum2026tn}; deploying the two formulations on such platforms, with the canonical fixings of Appendix~\ref{app:srg-reduction} applied, is the natural next step and requires no new theoretical ingredient. A concrete long-range target for this program is Conway's $99$-graph problem, recently attacked with SAT solvers \cite{keramatipour2026sat}. Our results frame the QUBO attack precisely, since Theorem~\ref{thm:no-aux} rules out any auxiliary-free shortcut, the pipeline route requires $4851$ edge variables, and the complete formulation requires roughly $4.7\times 10^{5}$ binary variables after quadratization, demanding but within the demonstrated operating range of current classical QUBO software. The SRG family, graded by the feasibility table of Appendix~\ref{app:feasibility}, thereby provides a natural benchmark suite on which classical, quantum-inspired and quantum QUBO solvers can be systematically compared, with Conway's problem as its most prominent open target.

Quantum hardware remains a complementary avenue within this picture. The Stage~I generation QUBO of Sec.~\ref{sec:pipeline} requires $\Theta(N^2)$ logical variables and could in principle be deployed at $N$ ranging into the low hundreds on a current annealing QPU such as D-Wave Advantage2, with Zephyr topology, roughly $7000$ qubits and native connectivity $20$, subject to minor-embedding overhead \cite{boothby2016}, whereas the $\Theta(N^3)$ scaling of the auxiliary-variable formulation confines a fully quantum treatment to the smallest targets. More promising in the near term is a hybrid quantum--classical division of labor, for which the generate--prune--verify pipeline is naturally suited. A quantum or quantum-inspired sampler handles the Stage~I candidate generation, while the Hoffman filter and the algebraic certification run classically, an arrangement that becomes advantageous precisely in the regimes where quantum sampling outperforms classical and tensor-network alternatives \cite{kshetrimayum2026tn}. The structural relevance of such synthesis extends to the construction of quantum-information objects that are combinatorially supported by SRGs, including mutually unbiased bases from Paley graphs \cite{Bandyopadhyay2001} and equiangular line sets associated with prescribed SRG parameters \cite{LEMMENS1973494}, where explicit realizations remain rare.

Open directions include reducing residual symmetries beyond the canonical fixing of three rows of $A$, exploring alternative quadratizations whose energy landscape is more amenable to annealing, and characterizing the dependence of the Hoffman-filter discriminativeness on the SRG parameters in regimes beyond those considered here. A further intriguing possibility, suggested by the structure of Eq.~\eqref{eq:H-SRG}, is an interpolated annealing protocol in which the multipliers $w_2$ and $w_3$ are switched on slowly along the schedule, deforming the landscape continuously from the $k$-regular generation Hamiltonian $H_{\mathrm{gen}}$ to the full SRG Hamiltonian with its Rosenberg auxiliaries; such a protocol admits analysis both as a classical annealing schedule and as a quantum one, where it maps almost natively onto the programmable schedules of annealing hardware. A systematic study of these schedules goes beyond the present scope, but the two formulations developed here provide the fixed endpoints between which any such interpolation would run.

\section*{Acknowledgements}
This work was supported by the FONDECYT Regular Grant Number 1230586, Chile, and by VRI scholarship from Pontificia Universidad Católica de Chile. The authors used Claude (Anthropic) as a writing-assistance tool to improve the clarity and style of the English text. All authors reviewed and edited the resulting text and take full responsibility for the content of this work.

\appendix

\section{Impossibility of auxiliary-free encodings: proofs and exhaustive frame study}
\label{app:no-aux}

This appendix supplies the proofs behind Sec.~\ref{sec:noqubo} and extends the analysis in two directions, first to relabeling-invariant penalties of degree three, and second to the symmetry-broken frames produced by the canonical fixing of Appendix~\ref{app:srg-reduction}, for which we settle the existence question exhaustively on the two smallest instances of this work.

\subsection{Exactness and the trap of a posteriori encodings}
\label{app:no-aux-setup}

We first justify the requirement $\mathcal{M}(f)=S$ adopted in Sec.~\ref{subsec:noquad}. If one merely asks for a quadratic whose global minimizers form a nonempty subset of $S$, the question trivializes. For any single known solution $x^\star\in S$, the Hamming distance $\sum_i (x_i - x^\star_i)^2$ is a quadratic function on the binary cube whose unique global minimizer is $x^\star$, so a subset encoding always exists as soon as an SRG exists. Nothing is gained, because writing this function down requires possessing the solution beforehand. The meaningful question concerns formulations that are constructible from the parameters $(N,k,\lambda,\mu)$ alone. Such a construction has no information with which to prefer one solution over another, and any penalty built from the defining constraints, being a sum of terms that vanish when the constraints hold and are positive otherwise, is automatically constant on $S$ and strictly positive outside it, for every choice of the weights. The condition $\mathcal{M}(f)=S$ is the mathematical expression of this even-handedness, and it is the condition that Theorem~\ref{thm:no-aux} shows to be unattainable beyond the degenerate families.

\subsection{Proof of Lemma~\ref{lem:invariant-quadratics}}
\label{app:no-aux-lemma}

\begin{proof}
A polynomial of degree at most two is a linear combination of the monomials $1$, $x_e$, and $x_e x_{e'}$ with $e\neq e'$, where $e$ and $e'$ run over edge slots, that is, over unordered pairs of distinct vertices. Under a relabeling, monomials map to monomials, so an invariant polynomial must assign the same coefficient to all monomials in the same $\Sigma_N$-orbit and is therefore a combination of the orbit sums. The orbits are easily listed. All single slots form one orbit, whose sum is $\mathcal{E}$. Pairs of distinct slots sharing exactly one vertex form a second orbit, whose sum is $\mathcal{W}$, because choosing two edges at a common vertex $v$ can be done in $\binom{d_v}{2}$ ways. Pairs of disjoint slots form the third orbit, whose sum is the number of remaining pairs, $\mathcal{D}=\binom{\mathcal{E}}{2}-\mathcal{W}$. Finally, $\mathcal{E}$ equals half the sum of the degrees and $\mathcal{W}$ is a sum over vertices of a function of $d_v$, so the values of $\mathcal{E}$, $\mathcal{W}$ and $\mathcal{D}$ at a configuration are all computable from its degree sequence alone.
\end{proof}

A comment on the role of $\lambda$ and $\mu$ is in order, expanding the discussion of Sec.~\ref{subsec:noquad}. The coefficients $\kappa_0,\dots,\kappa_3$ of the symmetrized function $\bar f = \kappa_0+\kappa_1\mathcal{E}+\kappa_2\mathcal{W}+\kappa_3\mathcal{D}$ appearing in the proof of Theorem~\ref{thm:no-aux} are constants that may well depend on all four parameters through the construction of $f$, and nothing in the argument prevents this. What the lemma asserts is a statement about functional dependence on the configuration. Whatever the constants are, the value $\bar f(x)$ depends on $x$ only through the three numbers $\mathcal{E}(x)$, $\mathcal{W}(x)$ and $\mathcal{D}(x)$, which are determined by the degree sequence of $x$. Two configurations with identical degree sequences therefore receive identical values of $\bar f$ for every admissible choice of the constants. The conditions involving $\lambda$ and $\mu$ constrain common-neighbor counts, and common-neighbor counts are not functions of the degree sequence, as the pair formed by the Petersen graph and any other cubic graph on ten vertices already shows. This is precisely the information that an invariant quadratic is structurally unable to encode, however its coefficients are tuned.

\subsection{Classification and computational verification}
\label{app:no-aux-class}

The classification of Corollary~\ref{cor:no-aux-class} follows from elementary case analysis. For $k=1$ every $1$-regular graph on $2t$ vertices is a perfect matching, and the perfect matchings are exactly the SRGs with parameters $(2t,1,0,0)$, so $S=R$. Complementation maps the edge configuration $x$ to $\mathbf{1}-x$, an affine substitution that maps polynomials of degree at most two to polynomials of degree at most two and exchanges the SRG family with parameters $(N,k,\lambda,\mu)$ with its complementary family; the case $k=N-2$, the cocktail-party graphs, is therefore equivalent to $k=1$. For $k=2$ and $N=5$ the only $2$-regular graph on five vertices is the pentagon, since a disjoint union of cycles covering five vertices with all cycle lengths at least three is necessarily a single five-cycle, so again $S=R$. For every other feasible quadruple in the range explored in this work, and generically for all primitive families, there exist $k$-regular graphs on $N$ vertices that are not SRGs with the target parameters, as the superexponential growth of the number of $k$-regular graphs \cite{mckay_wormald_1991} makes overwhelmingly likely; whenever such a companion exists, Theorem~\ref{thm:no-aux} forbids the encoding.

The boundary of the classification was verified computationally by exhaustive enumeration over all $2^{\binom{N}{2}}$ edge configurations. For $(5,2,0,1)$ the set $\mathcal{M}(H_{\mathrm{gen}})$ contains $12$ configurations, all labeled pentagons, and for $(6,4,2,4)$ it contains $15$ configurations, all labeled copies of $K_{3\times2}$, confirming $S=R$ in both cases. For $(6,3,0,3)$ it contains $70$ configurations, of which only $10$ are labeled copies of $K_{3,3}$, so $S\subsetneq R$. For this last case we also solved the linear feasibility problem for an exact encoding directly, as an independent check that bypasses the symmetrization argument altogether. The coefficients of a candidate quadratic enter linearly in the conditions $f=0$ on the $10$ solutions and $f\ge 1$ on the remaining $32{,}758$ configurations, so the existence of an exact encoding is a linear program in the $121$ coefficients; it is infeasible, in agreement with Theorem~\ref{thm:no-aux}.

\subsection{Invariant penalties of degree three also fail}
\label{app:no-aux-cubic}

One may ask whether an invariant cubic penalty could already encode the SRG conditions, which would make the quartic $H_{\lambda\mu}$ an overshoot. The orbit-counting logic of Lemma~\ref{lem:invariant-quadratics} extends directly. A monomial of degree at most three is labeled by a set of at most three edge slots, and its $\Sigma_N$-orbit is determined by the isomorphism type of the union of those edges. The orbit sums are therefore the counts of the eight possible types, namely a single edge, two edges sharing a vertex, forming the path $P_3$, two disjoint edges, and, with three edges, the triangle, the path $P_4$, the star $K_{1,3}$, the disjoint union $P_3{\cup}K_2$, and the matching of three disjoint edges. Consequently, two graphs sharing all eight counts cannot be distinguished by any invariant polynomial of degree at most three.

Such pairs exist for the benchmark families of this work. The Petersen graph and the $5$-prism $C_5\square K_2$, both $3$-regular on ten vertices and the second not strongly regular, have identical counts of all eight types, namely $15$, $30$ and $75$ for the types with at most two edges, and $0$, $60$, $10$, $240$ and $145$ for the triangle, $P_4$, $K_{1,3}$, $P_3{\cup}K_2$ and the three-edge matching respectively. Likewise, a single degree-preserving edge switch applied to Paley$(9)$ produces a $4$-regular non-SRG graph on nine vertices whose eight counts coincide with those of Paley$(9)$ itself, namely $18$, $54$, $99$ and then $6$, $144$, $36$, $450$, $180$ in the same order, including in particular the number of triangles. For these families, every relabeling-invariant encoding of degree at most three has spurious global minimizers, and the quartic penalty $H_{\lambda\mu}$ has the minimal degree possible among invariant encodings.

\subsection{Exhaustive study of canonically fixed frames}
\label{app:no-aux-frames}

Theorem~\ref{thm:no-aux} feeds on the relabeling symmetry, and the canonical fixing of Appendix~\ref{app:srg-reduction} exists precisely to destroy that symmetry. It is therefore a legitimate question, not answered by the theorem, whether an exact quadratic encoding might exist on the free variables of a canonically fixed frame. Given the fixed rows, the solution set becomes the set $S'$ of valid completions, and one seeks a quadratic $f$ in the $m'$ free variables with $\mathcal{M}(f)=S'$. Deciding this is computationally clean, because the coefficients of $f$ enter the requirements linearly. One imposes $f=0$ on each completion in $S'$ and $f\ge1$ on each of the $2^{m'}-|S'|$ remaining configurations, the normalization of the gap to unity being free of cost, and determines whether this system of linear constraints on the coefficients is feasible. We settled the question exhaustively for the two smallest frames of this work, certifying each outcome against the full configuration space; infeasibility of any subset of the constraints already implies global infeasibility, while feasibility is claimed only after a scan of all configurations reports no violation.

\paragraph{Petersen frame.}
For $(10,3,0,1)$ the $\lambda=0$ fixing of Sec.~\ref{app:lambda-zero-case} leaves $m'=21$ free variables, and a scan of all $2^{21}=2{,}097{,}152$ completions shows that exactly $|S'|=4$ of them are valid Petersen completions. The linear system is infeasible, so no quadratic in the free variables encodes this frame exactly. Infeasibility comes with a duality certificate of Farkas type, which we verified in exact rational arithmetic and which in this case is simple enough to check by hand. There are four spurious completions $\tilde s_1,\dots,\tilde s_4$, all of them $3$-regular graphs, such that the four solutions $s_1,\dots,s_4$ and the four $\tilde s_i$ satisfy, monomial by monomial up to degree two,
\begin{equation}\label{eq:farkas-cert}
\sum_{i=1}^{4}\varphi(\tilde s_i) \;=\; \sum_{i=1}^{4}\varphi(s_i),
\qquad \varphi(x) = \bigl(1,\;(x_a)_a,\;(x_a x_b)_{a<b}\bigr).
\end{equation}
Applying any quadratic $f$, which is an arbitrary linear combination of the entries of $\varphi$, to both sides gives $\sum_i f(\tilde s_i)=\sum_i f(s_i)$; an exact encoding would make the right-hand side vanish while the left-hand side is at least four times the gap, which is a contradiction. The identity~\eqref{eq:farkas-cert} originates in a recombination structure. The eight configurations agree everywhere except on two blocks of coordinates, an $8$-bit block $\mathcal{A}$ formed by the entries joining rows $2,3$ to columns $6,\dots,9$ and a $4$-bit block $\mathcal{B}$ formed by the entries in $\{6,7\}\times\{8,9\}$, and they factor as
\[
\begin{aligned}
s\colon\quad     & (\sigma_1,\psi_1),\,(\bar\sigma_1,\psi_1),\,(\sigma_3,\psi_2),\,(\bar\sigma_3,\psi_2),\\
\tilde s\colon\quad & (\sigma_3,\psi_1),\,(\bar\sigma_3,\psi_1),\,(\sigma_1,\psi_2),\,(\bar\sigma_1,\psi_2),
\end{aligned}
\]
where $\bar \sigma$ denotes the bitwise complement inside $\mathcal{A}$. Both quadruples contain the same four $\mathcal{A}$-patterns and the same multiset $\{\psi_1,\psi_1,\psi_2,\psi_2\}$ of $\mathcal{B}$-patterns, so all moments internal to either block match trivially, and the cross moments match because $\sigma_1+\bar \sigma_1=\sigma_3+\bar \sigma_3=(1,\dots,1)$, which makes the sums $(\sigma_1+\bar \sigma_1)\otimes \psi$ and $(\sigma_3+\bar \sigma_3)\otimes \psi$ coincide for either $\psi$. No quadratic can undo this pairing.

\paragraph{Paley$(9)$ frame.}
For $(9,4,1,2)$ the general-$\lambda$ fixing of Sec.~\ref{app:general-case} leaves $m'=23$ free variables, with $|S'|=4$ valid completions among the $2^{23}=8{,}388{,}608$ possibilities. Here the linear system turns out to be feasible. There exists an integer quadratic $g$ with only $98$ nonzero coefficients, all lying in $\{-7,\dots,20\}$, such that $g=0$ on the four valid completions and $g\ge3$ on the remaining $8{,}388{,}604$ configurations, a statement verified by an exhaustive scan in integer arithmetic and hence exact. Auxiliary-free encodings of canonically fixed frames can therefore exist.

The two frames together show that the frame-level question is genuinely instance-dependent, and they also show why the Paley$(9)$ example does not yield an encoding method. The feasible quadratic was produced by a computation whose input includes the solution set $S'$ itself, so one must already possess the SRGs in order to formulate, let alone certify, the linear system that outputs $g$. This is the circularity of the Hamming-distance construction of Sec.~\ref{app:no-aux-setup} in a richer guise, and the Petersen frame proves that no uniform recipe can exist. The practical alternatives thus remain exactly the two developed in the main text, namely to stay quadratic in the edge variables and accept that the QUBO output is a candidate set requiring classical certification, as in Sec.~\ref{sec:pipeline}, or to enforce the constraints exactly at the price of $\Theta(N^3)$ auxiliary variables, as in Sec.~\ref{sec:full-qubo}.

\section{Reduction of the number of free parameters in the adjacency matrix of an SRG}
\label{app:srg-reduction}

A naive QUBO encoding of the SRG problem treats every entry of the adjacency matrix as an independent binary variable. The resulting search space is highly redundant, since every relabeling of the vertices produces a binary string that encodes the same graph, generating an $N!$-fold degeneracy in the energy landscape. A substantial fraction of this redundancy can be removed before any optimization is performed, by adopting a canonical labeling that exploits the equitable partition of the vertex set induced by a fixed reference vertex \cite{brouwer_haemers_spectra}. In this appendix we make this reduction explicit at the level of binary edge variables, derive the corresponding free-parameter count for arbitrary SRG parameters, and discuss the additional reduction available in the triangle-free regime $\lambda=0$. The construction is consistent with the fixed-row symmetry-breaking strategy of Sec.~\ref{subsec:fixed-row}, of which it constitutes the natural SRG-aware extension.

\subsection{Three-level structure with respect to a reference vertex}

Let $G$ be a strongly regular graph with parameters $(N,k,\lambda,\mu)$ and adjacency matrix $A$, identified throughout with the binary edge variables $x_{i,j}\in\{0,1\}$ via $A_{i,j}=x_{i,j}$, with $x_{i,j}=x_{j,i}$ and $x_{i,i}=0$. Singling out an arbitrary vertex, which we label $0$, induces the canonical partition
\begin{align}
\mathcal{L}_0 &= \{0\}, \notag\\
\mathcal{L}_1 &= \{1,\ldots,k\} = N(0), \\
\mathcal{L}_2 &= \{k+1,\ldots,N-1\} = V\setminus(\{0\}\cup N(0)), \notag
\end{align}
where $N(0)$ denotes the neighborhood of vertex $0$. The strong regularity of $G$ imposes a rigid structure on the inter-level edges, illustrated in Fig.~\ref{fig:srg-levels}. Any vertex $j\in\mathcal{L}_1$ is adjacent to $0$, so it must share exactly $\lambda$ neighbors with $0$; these necessarily lie in $\mathcal{L}_1$, so $j$ has $\lambda$ neighbors in $\mathcal{L}_1$ and $k-\lambda-1$ in $\mathcal{L}_2$. Conversely, any vertex $\ell\in\mathcal{L}_2$ is non-adjacent to $0$ and must share $\mu$ neighbors with it; these lie in $\mathcal{L}_1$, so $\ell$ has $\mu$ neighbors in $\mathcal{L}_1$ and $k-\mu$ in $\mathcal{L}_2$. Counting in two ways the edges between $\mathcal{L}_1$ and $\mathcal{L}_2$ yields the well-known parameter relation
\begin{equation}
k(k-\lambda-1) = \mu(N-k-1).
\label{eq:srg-param-relation}
\end{equation}

\begin{figure}
    \centering
    \begin{subfigure}[b]{0.45\linewidth}
        \centering
        \includegraphics[width=\linewidth]{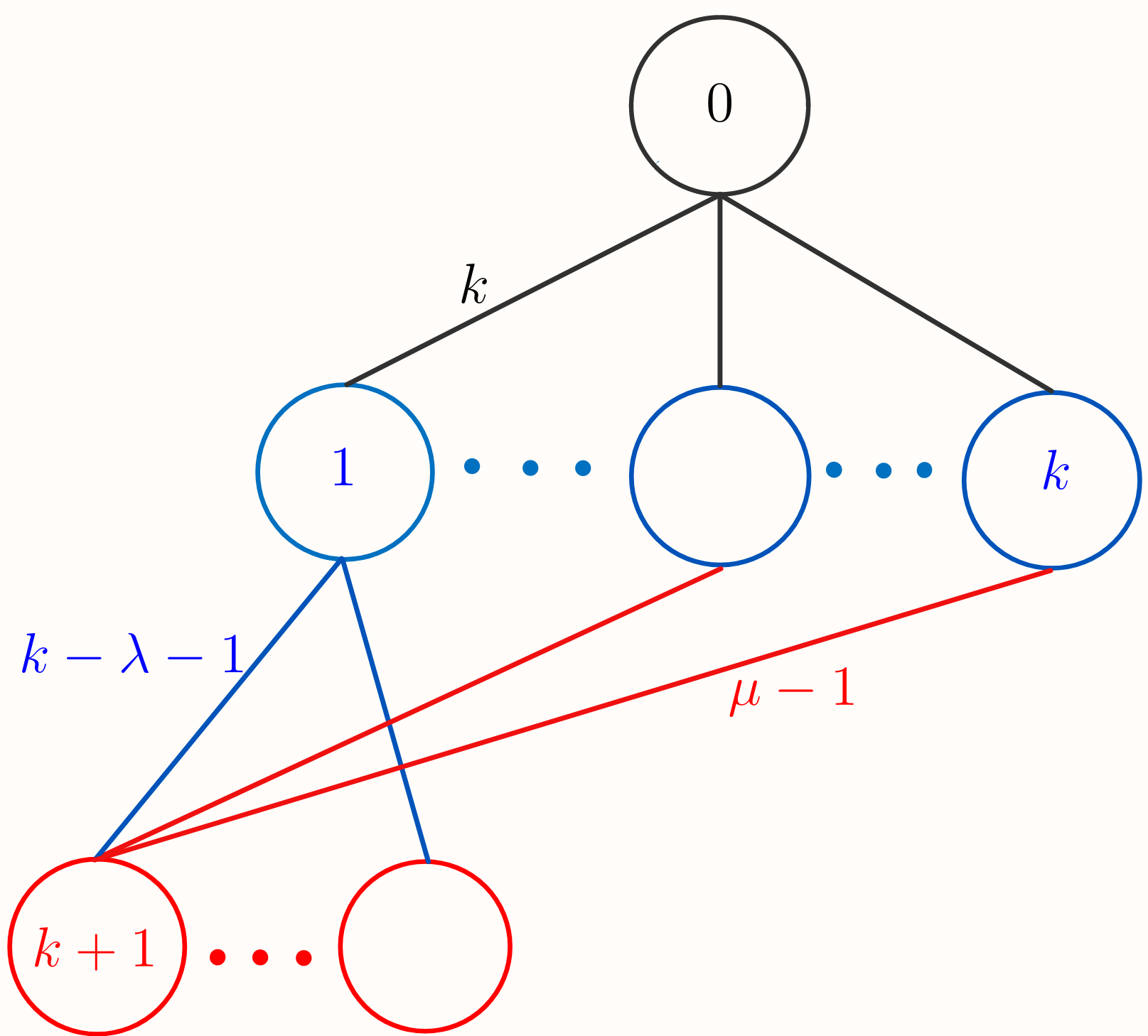}
        \caption{General three-level scheme.}
        \label{fig:srg-levels-general}
    \end{subfigure}
    \hfill
    \begin{subfigure}[b]{0.45\linewidth}
        \centering
        \includegraphics[width=\linewidth]{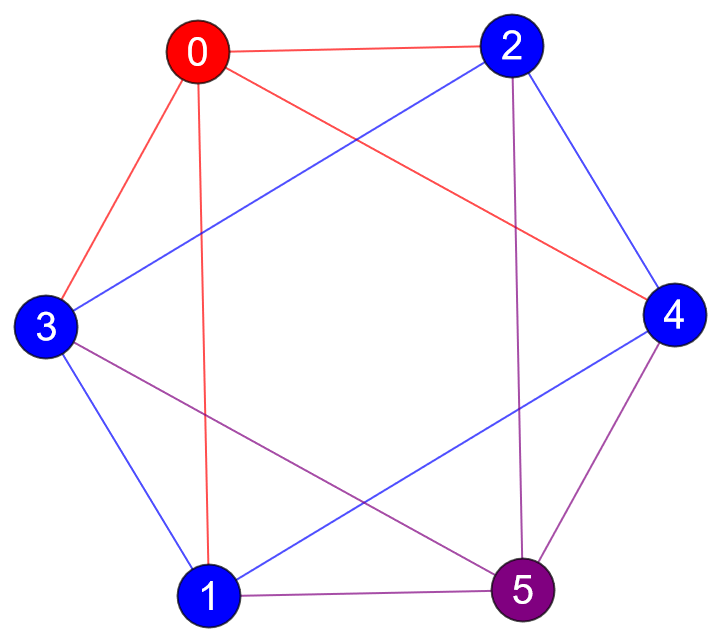}
        \caption{Instance for SRG$(6,4,2,4)$.}
        \label{fig:srg-levels-example}
    \end{subfigure}
    \caption{\justifying Three-level decomposition of the vertex set with respect to a reference vertex $0$. Level $\mathcal{L}_0$ is the singleton $\{0\}$; level $\mathcal{L}_1$ contains the $k$ neighbors of $0$; level $\mathcal{L}_2$ contains the remaining $N-k-1$ vertices.}
    \label{fig:srg-levels}
\end{figure}

\subsection{\texorpdfstring{Canonical fixing for arbitrary $\lambda$}{Canonical fixing for arbitrary lambda}}
\label{app:general-case}

Working in the upper triangle of $A$ leaves $\binom{N}{2}=N(N-1)/2$ binary unknowns once the trivial constraints $x_{i,j}=x_{j,i}$ and $x_{i,i}=0$ are imposed. The freedom to relabel vertices within each level can now be used to fix three rows of the adjacency matrix without loss of generality.

\paragraph*{Row $0$.} By definition of $\mathcal{L}_1$,
\begin{equation}
x_{0,j}=
\begin{cases}
1, & 1\le j\le k,\\
0, & k+1\le j\le N-1,
\end{cases}
\label{eq:row0}
\end{equation}
which fixes all $N-1$ off-diagonal entries of row $0$.

\paragraph*{Row $1$.} Vertex $1\in\mathcal{L}_1$ has exactly one neighbor in $\mathcal{L}_0$, namely vertex $0$, together with $\lambda$ neighbors in $\mathcal{L}_1\setminus\{1\}$ and $k-\lambda-1$ neighbors in $\mathcal{L}_2$. Relabeling vertices within $\mathcal{L}_2$ allows one to take the latter to be the smallest available indices, so
\begin{equation}
x_{1,j}=
\begin{cases}
1, & j=0,\\
\text{free}, & 2\le j\le k,\\
1, & k+1\le j\le 2k-\lambda-1,\\
0, & 2k-\lambda\le j\le N-1.
\end{cases}
\label{eq:row1-general}
\end{equation}
Among the $N-2$ entries with $j\ge 2$, this fixes $N-k-1$ entries and leaves $k-1$ free, subject to the global constraint $\sum_{j=2}^k x_{1,j}=\lambda$ enforced by the SRG conditions.

\paragraph*{Row $k+1$.} Vertex $k+1\in\mathcal{L}_2$ shares $\mu$ neighbors with $0$, all of which lie in $\mathcal{L}_1$. The choice of vertex $1$ as the second anchored vertex in row~\eqref{eq:row1-general} forces $x_{1,k+1}=1$, accounting for one of these $\mu$ neighbors. Relabeling within $\mathcal{L}_1\setminus\{1\}$ allows the remaining $\mu-1$ to be placed at the smallest available indices, yielding
\begin{equation}
x_{k+1,j}=
\begin{cases}
0, & j=0,\\
1, & 1\le j\le \mu,\\
\text{free}, & \mu+1\le j\le k\ \text{or}\ k+2\le j\le N-1.
\end{cases}
\label{eq:rowk1-general}
\end{equation}
Of the entries appearing for the first time, only $\mu-1$ are fixed, namely those with $2\le j\le\mu$; the entry at $j=1$ is already determined by~\eqref{eq:row1-general} and the entry at $j=0$ by~\eqref{eq:row0}.

\begin{observation}
\label{obs:count-general}
Under the canonical fixing~\eqref{eq:row0}--\eqref{eq:rowk1-general}, the upper-triangular adjacency matrix of an SRG$(N,k,\lambda,\mu)$ contains
\begin{eqnarray}
F(N,k,\mu) &\;=\;& \binom{N}{2}-(N-1)-(N-k-1)-(\mu-1)\nonumber\\ &\;=\;& \frac{N^2-5N+2k-2\mu+6}{2}
\label{eq:F-general}
\end{eqnarray}
binary parameters that are not determined by the labeling convention.
\end{observation}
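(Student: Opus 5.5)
The count in Observation~\ref{obs:count-general} follows by direct bookkeeping. I will list the upper-triangular entries $x_{i,j}$, $i<j$, that the canonical fixing \eqref{eq:row0}--\eqref{eq:rowk1-general} determines, row by row. I will make sure each entry is counted only once, since a symmetric entry can be reached from two rows. Subtracting the number of fixed entries from the total $\binom{N}{2}$ then gives the number of undetermined binary parameters $F(N,k,\mu)$.

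First, row $0$ by \eqref{eq:row0} fixes all $N-1$ entries $x_{0,j}$, $1\le j\le N-1$. Second, in row $1$ the entry $x_{1,0}=x_{0,1}$ has already been counted, so only the entries $x_{1,j}$ with $2\le j\le N-1$ are new, and there are $N-2$ of them. By \eqref{eq:row1-general}, those with $2\le j\le k$ remain free, which is $k-1$ entries. Those with $k+1\le j\le N-1$ are fixed, either to $1$ (for $j\le 2k-\lambda-1$) or to $0$. This contributes $N-k-1$ new fixed entries. Note that the cardinality constraint $\sum_{j=2}^k x_{1,j}=\lambda$ couples the free entries but does not assign any single one, so consistent with ``not determined by the labeling convention'' these entries are kept as free parameters.

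Third, row $k+1$. Its entries $x_{k+1,0}$ and $x_{k+1,1}$ coincide with $x_{0,k+1}$ and $x_{1,k+1}$, which have already been counted. The entries $x_{k+1,j}$ with $\mu+1\le j\le k$ or $j\ge k+2$ are declared free. The only new fixed entries are therefore $x_{k+1,j}=1$ for $2\le j\le\mu$, which is $\mu-1$ entries. Here I will check consistency: $x_{1,k+1}=1$ holds because $k+1\le 2k-\lambda-1$, i.e.\ $k-\lambda-1\ge1$. This is the nondegenerate case; I would note in passing that it requires $\mu\ge1$ as well.

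Summing gives $(N-1)+(N-k-1)+(\mu-1)$ fixed entries. Subtracting from $N(N-1)/2$ yields
\[
\frac{N^2-N-2(2N-k+\mu-3)}{2}=\frac{N^2-5N+2k-2\mu+6}{2},
\]
as claimed. The only delicate point is avoiding double counting of the symmetric entries $x_{1,0}$, $x_{k+1,0}$ and $x_{k+1,1}$, which I have handled by counting each row only over its first-appearing entries; the rest is arithmetic.
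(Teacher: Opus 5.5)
Your proof is correct and follows the paper's own argument essentially line by line. You use the same row-by-row tally of newly fixed entries, $(N-1)+(N-k-1)+(\mu-1)$, excluding the overlapping entries $x_{0,1}$, $x_{0,k+1}$ and $x_{1,k+1}$, and then make the same subtraction from $\binom{N}{2}$. Your added remark that the fixing of row $k+1$ is only consistent in the nondegenerate case $\mu\ge 1$, $k-\lambda-1\ge 1$ is a small precision that the paper leaves implicit.
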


\begin{proof}
The upper triangle of $A$ contains $\binom{N}{2}$ entries. Equation~\eqref{eq:row0} fixes $N-1$ of them. Equation~\eqref{eq:row1-general} contributes $N-2$ entries with $j\ge 2$; among these, $k-1$ remain free and $N-k-1$ are fixed, namely $k-\lambda-1$ ones and $N-2k+\lambda$ zeros. Equation~\eqref{eq:rowk1-general} contributes new fixings only at $j=2,\ldots,\mu$, since $j=0,1$ overlap with the previous rows; this adds $\mu-1$ fixed entries. Subtracting the three contributions from $\binom{N}{2}$ yields~\eqref{eq:F-general}.
\end{proof}

Note that the count~\eqref{eq:F-general} does not depend explicitly on $\lambda$; the $\lambda$-dependence enters only through the global constraint $\sum_{j=2}^{k}x_{1,j}=\lambda$, which restricts the values that the $k-1$ free entries of row $1$ can simultaneously take but does not eliminate any of them as independent variables.

\subsubsection*{\texorpdfstring{Example: SRG$(6,4,2,4)$}{Example: SRG(6,4,2,4)}}

For the cocktail-party graph $\mathrm{SRG}(6,4,2,4)$, the prescription~\eqref{eq:row0}--\eqref{eq:rowk1-general} reduces the adjacency matrix to
\begin{equation}
A \;=\; \begin{pmatrix}
0 & 1 & 1 & 1 & 1 & 0 \\
1 & 0 & x_{1,2} & x_{1,3} & x_{1,4} & 1 \\
1 & x_{1,2} & 0 & x_{2,3} & x_{2,4} & 1 \\
1 & x_{1,3} & x_{2,3} & 0 & x_{3,4} & 1 \\
1 & x_{1,4} & x_{2,4} & x_{3,4} & 0 & 1 \\
0 & 1 & 1 & 1 & 1 & 0
\end{pmatrix},
\label{eq:srg6-matrix}
\end{equation}
which contains $F(6,4,4)=6$ free parameters $\{x_{1,2},x_{1,3},x_{1,4},x_{2,3},x_{2,4},x_{3,4}\}$, in agreement with~\eqref{eq:F-general}.

\subsection{\texorpdfstring{Refinement for triangle-free SRGs with $\lambda=0$}{Refinement for triangle-free SRGs with lambda=0}}
\label{app:lambda-zero-case}

When $\lambda=0$, vertex $1$ shares no neighbors with vertex $0$, and the $k-1$ entries of row $1$ left free in~\eqref{eq:row1-general} are forced to vanish:
\begin{equation}
x_{1,2}=x_{1,3}=\cdots=x_{1,k}=0.
\end{equation}
Similarly, vertex $k+1$ is now adjacent to vertex $1$, and $\lambda=0$ forces $x_{k+1,u}=0$ for every $u\in N(1)\setminus\{0,k+1\}=\{k+2,\ldots,2k-1\}$. The remaining $k-\mu-1$ neighbors of vertex $k+1$ in $\mathcal{L}_2$ can again be placed at the smallest available indices by relabeling. Combining these refinements with~\eqref{eq:row0}--\eqref{eq:rowk1-general} fully determines rows $0$, $1$ and $k+1$ of $A$:
\begin{align}
x_{0,j}&=\begin{cases}1,&1\le j\le k,\\ 0,&k+1\le j\le N-1,\end{cases}\label{eq:row0-l0}\\[2pt]
x_{1,j}&=\begin{cases}1,&j=0,\\ 0,&2\le j\le k,\\ 1,&k+1\le j\le 2k-1,\\ 0,&2k\le j\le N-1,\end{cases}\label{eq:row1-l0}\\[2pt]
x_{k+1,j}&=\begin{cases}0,&j=0,\\ 1,&1\le j\le \mu,\\ 0,&\mu+1\le j\le k,\\ 0,&k+2\le j\le 2k-1,\\ 1,&2k\le j\le 3k-\mu-1,\\ 0,&3k-\mu\le j\le N-1.\end{cases}\label{eq:rowk1-l0}
\end{align}

\begin{observation}
\label{obs:count-lambda0}
Under the canonical fixing~\eqref{eq:row0-l0}--\eqref{eq:rowk1-l0}, the upper-triangular adjacency matrix of an $\mathrm{SRG}(N,k,0,\mu)$ contains
\begin{eqnarray}
F_0(N) &\;=\;& \binom{N}{2}-(N-1)-(N-2)-(N-3)\nonumber\\ &\;=\;& \frac{N^2-7N+12}{2}
\label{eq:F-lambda0}
\end{eqnarray}
free binary parameters. In particular, the count is independent of $k$ and $\mu$.
\end{observation}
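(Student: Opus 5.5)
The count follows the proof of Observation~\ref{obs:count-general}: start from the $\binom{N}{2}$ upper-triangular entries and subtract, row by row, the entries that the $\lambda=0$ fixing~\eqref{eq:row0-l0}--\eqref{eq:rowk1-l0} determines for the first time. We count only genuinely new entries, never one already fixed by an earlier row.

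Row $0$ is handled exactly as in the general case. Equation~\eqref{eq:row0-l0} fixes all $N-1$ off-diagonal entries $x_{0,j}$, $1\le j\le N-1$.

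Row $1$ is where the $\lambda=0$ refinement first matters. The entry $x_{1,0}$ already appeared in row $0$. Equation~\eqref{eq:row1-l0} now specifies every remaining entry $x_{1,j}$ with $2\le j\le N-1$: the $k-1$ previously free entries are set to zero, the block $k+1\le j\le 2k-1$ is set to one, and the rest are zero. That gives $N-2$ new fixed entries. The difference from the general count $N-k-1$ is exactly the $k-1$ entries freed up by $\lambda=0$.

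Row $k+1$ is the step that needs care. Its entries at $j=0$ and $j=1$ coincide with $x_{0,k+1}$ and $x_{1,k+1}$, which were fixed by the two previous rows. Every other index $j\in\{2,\dots,k\}\cup\{k+2,\dots,N-1\}$ is assigned a definite value by one of the cases of~\eqref{eq:rowk1-l0}. We should check that these cases tile the range without gaps or overlaps, namely
\[
[2,\mu],\quad [\mu+1,k],\quad [k+2,2k-1],\quad [2k,3k-\mu-1],\quad [3k-\mu,N-1].
\]
This needs $3k-\mu-1\le N-1$, which follows from $k-\mu-1\le N-2k$. That in turn comes from~\eqref{eq:srg-param-relation} with $\lambda=0$, since $k-\mu-1\le N-2k$ is equivalent to $k-1-\mu\le (N-k-1)-k$. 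Empty ranges, such as $[2,\mu]$ when $\mu=1$, are harmless. So row $k+1$ contributes $(N-1)-1-1=N-3$ new fixed entries.

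Subtracting gives
\[
F_0=\binom{N}{2}-(N-1)-(N-2)-(N-3)=\frac{N^2-N-6N+12}{2}=\frac{N^2-7N+12}{2},
\]
which is independent of $k$ and $\mu$. Equivalently, $F-F_0=N+k-\mu-3$, in agreement with Observation~\ref{obs:count-general}.

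The main obstacle is the overlap bookkeeping for row $k+1$: making sure the entries already fixed by rows $0$ and $1$ are not counted twice, and that the case split in~\eqref{eq:rowk1-l0} covers every remaining index exactly once. That second point is where the feasibility inequality is used.
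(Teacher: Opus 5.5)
Your argument is correct and is essentially the paper's proof: the same row-by-row subtraction of $N-1$, $N-2$ and $N-3$ newly fixed entries, where your $N-3$ for row $k+1$ is exactly the paper's two blocks of sizes $k-1$ (column $k+1$, rows $2,\dots,k$) and $N-k-2$ (row $k+1$, columns $\ge k+2$). Your extra tiling check is not needed for the count, and its middle step is off by one, since $k-\mu-1\le N-2k$ only gives $3k-\mu-1\le N$; the correct condition is $k-\mu\le N-2k$, which does follow from $k(k-1)=\mu(N-k-1)$ because for $\mu>0$ it reduces to $(k-\mu)(k-\mu-1)\ge 0$.
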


\begin{proof}
Row $0$ contributes $N-1$ fixed entries, as before. Row $1$ now has all $N-2$ entries with $j\ge 2$ fixed, since the $k-1$ entries left free in~\eqref{eq:row1-general} are forced to zero by $\lambda=0$. Row $k+1$ contributes two disjoint blocks of fixings in the upper triangle. The first block, of size $N-k-2$, consists of the entries with $j\ge k+2$, all determined by~\eqref{eq:rowk1-l0}. The second block, of size $k-1$, lies in column $k+1$ above row $k+1$ and consists of the entries $(j,k+1)$ for $j=2,\ldots,k$, which by symmetry are determined by~\eqref{eq:rowk1-l0} as $\mu-1$ ones followed by $k-\mu$ zeros. Subtracting all four contributions from $\binom{N}{2}$ yields~\eqref{eq:F-lambda0}.
\end{proof}

The reduction $F\to F_0$ amounts to eliminating $(k-1)+(N-\mu-2)=N+k-\mu-3$ further free parameters with respect to~\eqref{eq:F-general}, namely the $k-1$ free entries of row $1$ and $N-\mu-2$ additional entries of row $k+1$, a reduction that grows linearly with the number of vertices. It is worth emphasizing that the prescription~\eqref{eq:row0-l0}--\eqref{eq:row1-l0} for rows $0$ and $1$ coincides with the fixed-row symmetry-breaking strategy adopted in Sec.~\ref{subsec:fixed-row} for the generation of arbitrary $k$-regular graphs. In that broader context, the choice $x_{1,j}=0$ for $2\le j\le k$ is a labeling convention that prevents triangles involving the anchor vertices from being generated by the solver; in the SRG context studied here, the same prescription becomes a strict consequence of the parameter constraint $\lambda=0$, and the additional row~\eqref{eq:rowk1-l0} provides a further reduction specific to the SRG setting.

\subsubsection*{\texorpdfstring{Example: the Petersen graph SRG$(10,3,0,1)$}{Example: the Petersen graph SRG(10,3,0,1)}}

For the Petersen graph $\mathrm{SRG}(10,3,0,1)$, the prescription~\eqref{eq:row0-l0}--\eqref{eq:rowk1-l0} reduces the adjacency matrix to
\begin{equation*}
A \;=\; \begin{pmatrix}
0 & 1 & 1 & 1 & 0 & 0 & 0 & 0 & 0 & 0 \\
1 & 0 & 0 & 0 & 1 & 1 & 0 & 0 & 0 & 0 \\
1 & 0 & 0 & x_{2,3} & 0 & x_{2,5} & x_{2,6} & x_{2,7} & x_{2,8} & x_{2,9} \\
1 & 0 & x_{2,3} & 0 & 0 & x_{3,5} & x_{3,6} & x_{3,7} & x_{3,8} & x_{3,9} \\
0 & 1 & 0 & 0 & 0 & 0 & 1 & 1 & 0 & 0 \\
0 & 1 & x_{2,5} & x_{3,5} & 0 & 0 & x_{5,6} & x_{5,7} & x_{5,8} & x_{5,9} \\
0 & 0 & x_{2,6} & x_{3,6} & 1 & x_{5,6} & 0 & x_{6,7} & x_{6,8} & x_{6,9} \\
0 & 0 & x_{2,7} & x_{3,7} & 1 & x_{5,7} & x_{6,7} & 0 & x_{7,8} & x_{7,9} \\
0 & 0 & x_{2,8} & x_{3,8} & 0 & x_{5,8} & x_{6,8} & x_{7,8} & 0 & x_{8,9} \\
0 & 0 & x_{2,9} & x_{3,9} & 0 & x_{5,9} & x_{6,9} & x_{7,9} & x_{8,9} & 0
\end{pmatrix},
\end{equation*}
which contains $F_0(10)=21$ free parameters, in agreement with~\eqref{eq:F-lambda0}. The remaining freedom must then be resolved by enforcing the SRG constraints with $\mu=1$, which for this parameter set fixes $A$ uniquely up to isomorphism.

\subsection{Implementation in the auxiliary-variable QUBO}
\label{app:srg-reduction-qubo}

The canonical fixings of Sec.~\ref{app:general-case} and Sec.~\ref{app:lambda-zero-case} translate directly into a substantial preprocessing of the auxiliary-variable QUBO of Sec.~\ref{sec:full-qubo}. Their incorporation is purely substitutional and proceeds in two ordered steps, both of which can be performed once and for all before the QUBO matrix is assembled.

\subsubsection{Step 1: Substitution of edge variables}

Each entry $x_{i,j}$ determined by the canonical fixing, Eqs.~\eqref{eq:row0}--\eqref{eq:rowk1-general} for general $\lambda$ or Eqs.~\eqref{eq:row0-l0}--\eqref{eq:rowk1-l0} when $\lambda=0$, is replaced by its fixed value of $0$ or $1$ in every term of the Hamiltonian. The remaining edge variables, numbering $F(N,k,\mu)$ in the general case and $F_0(N)$ when $\lambda=0$, are kept as binary unknowns.

The free entries of row $1$ in Eq.~\eqref{eq:row1-general} are subject to the global constraint $\sum_{j=2}^{k} x_{1,j} = \lambda$, and the free entries of row $k+1$ in Eq.~\eqref{eq:rowk1-general} are subject to analogous global constraints. We do not impose these as external substitutions, because they cannot be resolved into individual $0/1$ assignments without further combinatorial choices that are not implied by the parameters $(N,k,\lambda,\mu)$ alone; instead, they are enforced automatically by the quadratic penalty $H_{\lambda\mu}^{\mathrm{(q)}}$ of Eq.~\eqref{eq:Hlm-aux} at the level of the Hamiltonian.

\subsubsection{Step 2: Propagation to auxiliary variables}

Each auxiliary variable of the Rosenberg quadratization, $y_{i,j,\ell} = x_{i,\ell}\,x_{j,\ell}$ from Eq.~\eqref{eq:aux-def}, is fully determined whenever both $x_{i,\ell}$ and $x_{j,\ell}$ have been fixed in Step~1. Such auxiliaries are substituted by the product of the corresponding fixed values and are removed from the QUBO together with their Rosenberg penalty term in $H_{\text{aux}}$, Eq.~\eqref{eq:Haux}. Auxiliaries for which only one or neither of the two factors is fixed remain as binary unknowns and continue to be governed by their original Rosenberg penalty.

\subsubsection{Variable counting}

Let $\varepsilon$ denote the number of edge variables remaining after Step~1, and $\alpha$ the number of auxiliary variables remaining after Step~2. The first quantity is given in closed form by Eqs.~\eqref{eq:F-general} and~\eqref{eq:F-lambda0}. The second admits the elementary expression
\begin{equation}
\alpha \;=\; \binom{N}{2}\,(N-2) \;-\; \sum_{\ell=0}^{N-1}\binom{d^{\mathrm{fix}}_\ell}{2},
\label{eq:aux-free-count}
\end{equation}
where $d^{\mathrm{fix}}_\ell$ is the number of edges incident to vertex $\ell$ that are determined by Step~1. The subtracted sum counts the triples $(i,j,\ell)$ with $i<j$ and $\ell \notin \{i,j\}$ for which both $x_{i,\ell}$ and $x_{j,\ell}$ are fixed, and therefore the corresponding $y_{i,j,\ell}$ is determined.

For the Petersen graph $\mathrm{SRG}(10,3,0,1)$, the canonical $\lambda=0$ fixing yields $d^{\mathrm{fix}}_0=d^{\mathrm{fix}}_1=d^{\mathrm{fix}}_4=9$ and $d^{\mathrm{fix}}_\ell=3$ for $\ell\in\{2,3,5,6,7,8,9\}$, so that $\sum_\ell\binom{d^{\mathrm{fix}}_\ell}{2} = 3\binom{9}{2}+7\binom{3}{2} = 3\cdot 36 + 7\cdot 3 = 108 + 21 = 129$, giving $\varepsilon = F_0(10) = 21$ and $\alpha = \binom{10}{2}(10-2) - 129 = 360 - 129 = 231$. The total number of binary variables is reduced from $\binom{10}{2}+\binom{10}{2}(10-2) = 45 + 360 = 405$ to $\varepsilon+\alpha = 21+231 = 252$, a reduction of about $38\%$ achieved without any combinatorial enumeration.

\subsubsection{Stopping criterion}

The canonical fixings of Sec.~\ref{app:general-case}--\ref{app:lambda-zero-case} saturate the symmetry that can be removed by relabeling alone, in the sense that any further fixing would require selecting among combinatorially distinct sub-cases not implied by the parameter quadruple $(N,k,\lambda,\mu)$. For instance, an attempt to fix entries of vertex $2 \in \mathcal{L}_1$ beyond what has already been determined would force a choice of which $\lambda$ neighbours of vertex $2$ in $\mathcal{L}_1$ are also neighbours of vertex $1$, a combinatorial datum that depends on the SRG instance and not solely on its parameters. Such fragmentation rapidly multiplies the number of preprocessing branches and offsets the benefit of any further variable reduction. We therefore stop at the canonical fixing of Sec.~\ref{app:general-case}--\ref{app:lambda-zero-case}, and leave the remaining symmetry to be broken implicitly by the QUBO minimization.

We note that more aggressive reductions are possible at the cost of giving up target-agnosticism. When the automorphism group $\mathrm{Aut}(G_{\mathrm{target}})$ of the intended SRG is known explicitly, one could in principle fix one representative per orbit of $\mathrm{Aut}(G_{\mathrm{target}})$ acting on edge variables, yielding a strictly larger variable reduction than the labeling-based fixing developed here. This target-specific reduction requires advance knowledge of the SRG instance one is looking for, however, and is therefore better suited to verification or enumeration tasks than to the construction task addressed in the present work, in which the QUBO formulation is intended to be agnostic to any particular realization of the parameter quadruple $(N,k,\lambda,\mu)$. The canonical fixing of Sec.~\ref{app:general-case}--\ref{app:lambda-zero-case} is therefore the natural compromise, using only the parameters as input and applying uniformly to all SRGs in a given family at the price of leaving residual labeling symmetries unbroken.

\section{Algebraic feasibility conditions for SRG parameters}
\label{app:feasibility}

The QUBO formulation of Sec.~\ref{sec:full-qubo} takes a parameter quadruple $(N,k,\lambda,\mu)$ as input, and produces by construction a Hamiltonian whose global minimizers are in correspondence with the strongly regular graphs having those parameters. A natural prerequisite is that at least one such graph exist or, more weakly, that the quadruple not be ruled out by simple algebraic arguments. In fact, the vast majority of integer quadruples in any reasonable range are incompatible with strong regularity, and the genuine SRGs are remarkably scarce. The purpose of this appendix is to collect, in self-contained form, the canonical algebraic conditions that any SRG must satisfy, and to apply them to enumerate the parameter sets within $N \le 49$, the regime relevant for the experiments and constructions of the present work \cite{brouwer_haemers_spectra,godsil_royle_algebraic}. The conditions presented below are necessary but not sufficient for the existence of an SRG with the specified parameters. Throughout the appendix we use the notation introduced in Sec.~\ref{sec:noqubo} for the adjacency matrix and its associated objects, and continue to denote the two non-trivial eigenvalues of $A$ by $\theta_2 \ge \theta_3$ as in Sec.~\ref{sec:pipeline}.

\subsection{The parameter equation}
\label{app:param-eq}

The most elementary algebraic constraint is purely combinatorial. By counting in two distinct ways the edges joining the neighborhood of any fixed vertex with its complement in $V$, one arrives at the SRG parameter equation
\begin{equation}
k(k - \lambda - 1) \;=\; (N - k - 1)\,\mu,
\label{eq:param-eq-app}
\end{equation}
which we have already encountered as Eq.~\eqref{eq:srg-param-relation} in the analysis of the three-level decomposition of Appendix~\ref{app:srg-reduction}. As a single Diophantine constraint, \eqref{eq:param-eq-app} is by itself a comparatively weak filter, since many quadruples that satisfy it fail one of the stronger conditions discussed below.

\subsection{Spectral integrality}
\label{app:spectral}

A second family of constraints arises from the spectrum of $A$. The defining SRG conditions can be packaged into the matrix identity
\begin{equation}
A^2 \;=\; k\,\mat{I} \,+\, \lambda\,A \,+\, \mu\,(\mat{J}-\mat{I}-A),
\label{eq:matrix-eq}
\end{equation}
which is Eq.~\eqref{eq:srg-cert} written in an equivalent form, since the $(i,j)$ entry of $A^2$ counts walks of length two from $i$ to $j$, which equals $k$ when $i=j$, $\lambda$ when $i\ne j$ and $i,j$ adjacent, and $\mu$ when $i\ne j$ and $i,j$ non-adjacent.

Because $A$ is $k$-regular, $A\mathbf{1}=k\mathbf{1}$ and $\mat{J}$ commutes with $A$. The two matrices admit a common eigenbasis. The all-ones vector $\mathbf{1}$ is an eigenvector of $A$ with eigenvalue $k$ and of $\mat{J}$ with eigenvalue $N$, and any vector orthogonal to $\mathbf{1}$ has $\mat{J}$-eigenvalue $0$. Restricting \eqref{eq:matrix-eq} to the orthogonal complement of $\mathbf{1}$, every non-trivial eigenvalue $\theta$ of $A$ satisfies the quadratic
\begin{equation}
\theta^2 - (\lambda-\mu)\,\theta - (k-\mu) \;=\; 0,
\label{eq:eigen-quadratic}
\end{equation}
whose roots are the two non-trivial eigenvalues
\begin{equation}
\theta_{2,3} \;=\; \frac{(\lambda-\mu) \pm \sqrt{D}}{2},
\qquad
D \,\equiv\, (\lambda-\mu)^2 + 4(k-\mu),
\label{eq:eigenvalues-app}
\end{equation}
with $D\ge 0$ since $\mu\le k$ for any SRG. This is the same expression already encountered in Sec.~\ref{sec:pipeline}.

Let $\nu_2$ and $\nu_3$ denote the multiplicities of $\theta_2$ and $\theta_3$ as eigenvalues of $A$. Two equations determine them,
\begin{align}
\nu_2 + \nu_3 &\;=\; N - 1, \label{eq:mult-sum} \\
k + \nu_2\,\theta_2 + \nu_3\,\theta_3 &\;=\; 0, \label{eq:mult-trace}
\end{align}
where \eqref{eq:mult-sum} reflects that the eigenspace of $k$ is one-dimensional, and \eqref{eq:mult-trace} expresses $\operatorname{tr}A=0$. Solving the linear system,
\begin{equation}
\nu_2, \nu_3 \;=\; \frac{1}{2}\!\left[(N-1) \,\mp\, \frac{2k + (N-1)(\lambda-\mu)}{\sqrt{D}}\right],
\label{eq:multiplicities}
\end{equation}
where the upper sign corresponds to $\nu_2$, the multiplicity of the larger eigenvalue $\theta_2$, and the lower sign to $\nu_3$, the multiplicity of $\theta_3$.
The multiplicities of the eigenvalues of an integer matrix are themselves non-negative integers. Imposing this on \eqref{eq:multiplicities} separates SRG parameter quadruples into two mutually exclusive families.

\paragraph{Type I: integer eigenvalues.}
If $D$ is a perfect square, both eigenvalues $\theta_{2,3}$ are integers, and $\sqrt{D}$ must additionally divide $2k + (N-1)(\lambda-\mu)$ for $\nu_2$ and $\nu_3$ to be integers. Both multiplicities must be strictly positive. Most SRGs with explicit combinatorial constructions fall in this class, including the Petersen graph, the Clebsch graph, the triangular graphs $T(n)$, and the rook graphs $L_2(n)$.

\paragraph{Type II: irrational eigenvalues, the conference quadruples.}
If $D$ is not a perfect square, \eqref{eq:multiplicities} can still produce integer multiplicities provided that the second term in brackets vanishes,
\begin{equation}
2k + (N-1)(\lambda-\mu) \;=\; 0,
\label{eq:conference-condition}
\end{equation}
which forces $\nu_2=\nu_3=(N-1)/2$ and hence $N$ odd. Substituting \eqref{eq:conference-condition} into \eqref{eq:param-eq-app} and simplifying, the conference parameters
\begin{equation}
N = 4\mu + 1, \qquad k = 2\mu, \qquad \lambda = \mu - 1,
\label{eq:conference-params}
\end{equation}
emerge as the unique solution. The eigenvalues are $\theta_{2,3}=(-1\pm\sqrt{N})/2$. Conference graphs include the Paley graphs $P(q)$ defined for prime powers $q\equiv 1\pmod 4$.

The boundary case in which the conference parameters \eqref{eq:conference-params} are realized for $N$ a perfect square deserves a short comment, since it directly affects the classification of three entries in Table~\ref{tab:feasible-srgs}. When $N=q^2$ for an odd integer $q$, the conference relation $D=N$ still holds but is now a perfect square, so the eigenvalues $\theta_{2,3}=(-1\pm q)/2$ are integers and the quadruple is Type~I despite satisfying \eqref{eq:conference-params}. Within $N\le 49$ this occurs for $N\in\{9,25,49\}$, that is, for the parameter quadruples $(9,4,1,2)$, $(25,12,5,6)$ and $(49,24,11,12)$. We classify all three as Type~I on the basis of their integer eigenvalues, in agreement with Brouwer and Haemers~\cite{brouwer_haemers_spectra}; reserving Type~II for the strictly irrational case keeps the I/II partition disjoint and consistent with the eigenvalue-based criterion used throughout this appendix.

\subsection{Krein conditions}
\label{app:krein}

Beyond integrality, the eigenvalues and their multiplicities of any SRG must satisfy two additional polynomial inequalities, known as the \emph{Krein conditions} \cite{brouwer_haemers_spectra}. They originate from the requirement that certain matrices in the dual basis of the Bose--Mesner algebra of the SRG have non-negative entries, a condition which combines spectral and combinatorial information in a non-trivial way. For SRGs, the conditions reduce to the pair of inequalities
\begin{align}
(\theta_2 + 1)\!\left(k + \theta_2 + 2\theta_2\theta_3\right) &\;\le\; (k + \theta_2)\,(\theta_3 + 1)^2, \label{eq:krein1} \\
(\theta_3 + 1)\!\left(k + \theta_3 + 2\theta_2\theta_3\right) &\;\le\; (k + \theta_3)\,(\theta_2 + 1)^2. \label{eq:krein2}
\end{align}
The Krein conditions are typically inactive for small $N$ but they do rule out parameter sets that pass the parameter equation and the integrality test, and they become essential at larger $N$.

\subsection{The absolute bound}
\label{app:absolute-bound}

A fourth necessary condition arises from a geometric argument due originally to Delsarte \cite{delsarte_1973_thesis} and refined into its modern form by Delsarte, Goethals and Seidel \cite{delsarte_goethals_seidel_1977}. The idea is to embed the vertex set of a putative SRG as a configuration of unit vectors in a Euclidean space, in such a way that the SRG structure is fully encoded by the inner-product geometry of the resulting configuration; classical bounds on the size of such configurations then translate into bounds on the parameters of the SRG.

Concretely, let $E_2 \subset \mathbb{R}^N$ denote the $\nu_2$-dimensional eigenspace of $A$ associated with $\theta_2$, and for each vertex $v\in V$ let $u_v$ denote the orthogonal projection of the standard basis vector $e_v\in\mathbb{R}^N$ onto $E_2$, normalized to unit length. The collection $\{u_v\}_{v\in V}$ is the \emph{Euclidean representation} of the SRG associated with $\theta_2$. A direct calculation in the spectral basis of $A$ shows that the inner products of these vectors take only two distinct off-diagonal values, one for adjacent pairs and one for non-adjacent pairs:
\begin{equation}
\langle u_v, u_w \rangle \;=\;
\begin{cases}
\eta_1, & v\sim w, \\
\eta_2, & v\not\sim w,\ v\ne w,
\end{cases}
\label{eq:two-distance}
\end{equation}
where $\eta_1$ and $\eta_2$ are explicit rational functions of $(N,k,\lambda,\mu)$ and the multiplicities $\nu_2,\nu_3$. A configuration of unit vectors with the property \eqref{eq:two-distance} is known as a \emph{spherical two-distance set} and is a classical object in the geometry of finite point sets.

The fundamental bound on the size of any spherical two-distance set in $\mathbb{R}^{\nu_2}$ is \cite{delsarte_goethals_seidel_1977,godsil_royle_algebraic}
\begin{equation}
\big|\{u_v\}_{v\in V}\big| \;\le\; \frac{\nu_2(\nu_2+3)}{2}.
\label{eq:sphere-2-bound}
\end{equation}
The proof, sketched here only at the level of dimension counting, associates with each vector $u_v$ a real-valued polynomial $F_v$ of degree two on the unit sphere in $\mathbb{R}^{\nu_2}$ that vanishes at all other points of the configuration. These polynomials are linearly independent, and the dimension of the relevant space of polynomials on the sphere is precisely $\nu_2(\nu_2+3)/2$, which corresponds to the harmonic polynomials of degrees $0$, $1$ and $2$ in $\nu_2$ variables, restricted to the sphere. The bound \eqref{eq:sphere-2-bound} is the natural analog of the classical observation that an orthonormal set in $\mathbb{R}^{\nu_2}$ contains at most $\nu_2$ vectors, generalizing the linear-independence count from a one-distance setting to a two-distance setting.

Applying \eqref{eq:sphere-2-bound} to both Euclidean representations of the SRG, that is, to the eigenspace $E_2$ of $\theta_2$ and to the eigenspace $E_3$ of $\theta_3$, one obtains the pair of inequalities
\begin{equation}
N \;\le\; \frac{\nu_2(\nu_2+3)}{2}, \qquad N \;\le\; \frac{\nu_3(\nu_3+3)}{2},
\label{eq:absolute-bound}
\end{equation}
collectively known as the \emph{absolute bound}. The qualifier absolute reflects the fact that \eqref{eq:absolute-bound} bounds $N$ in terms of the multiplicities alone, irrespective of any combinatorial input beyond the spectrum.

The absolute bound is generically inactive for the values of $N$ considered in the present work, since both $\nu_2$ and $\nu_3$ tend to grow with $N$ along the families that admit explicit constructions. It does become decisive for triangle-free SRGs with $\mu = 1$, where the multiplicity $\nu_3$ is small relative to $N$. The classical illustration is the Hoffman--Singleton problem. Combining the parameter equation with spectral integrality leaves only the four feasible triangle-free quadruples $(5,2,0,1)$, $(10,3,0,1)$, $(50,7,0,1)$ and $(3250,57,0,1)$, the first three of which correspond respectively to the pentagon, the Petersen graph, and the Hoffman--Singleton graph. The absolute bound and the Krein conditions are very close to saturation for the last quadruple, but neither rules it out, and the existence of an SRG with parameters $(3250,57,0,1)$ remains an open problem.

\subsection{Necessary, not sufficient}
\label{app:not-sufficient}

The four conditions reviewed above, the parameter equation \eqref{eq:param-eq-app}, the spectral integrality conditions of Sec.~\ref{app:spectral}, the Krein inequalities \eqref{eq:krein1}--\eqref{eq:krein2}, and the absolute bound \eqref{eq:absolute-bound}, constitute the canonical set of algebraic feasibility conditions for SRG parameters. A quadruple $(N,k,\lambda,\mu)$ satisfying all four is said to be \emph{algebraically feasible}, and the comprehensive table of feasible parameters maintained by Brouwer \cite{brouwer_online_table} is the standard reference for tracking the realizability status of each feasible case.

It is essential to emphasize that algebraic feasibility is necessary but not sufficient for the existence of an SRG with the given parameters. A classical illustration of the strength of these conditions is $(28,9,0,4)$, which satisfies the parameter equation and the spectral integrality conditions, with $\theta_{2,3}=(1,-5)$ and $(\nu_2,\nu_3)=(21,6)$, yet violates both the second Krein condition and the absolute bound, and is therefore already excluded above. The four conditions are not exhaustive, however, and a small but growing list of algebraically feasible quadruples is known not to correspond to any actual graph. Within the range of the table presented below, the parameter quadruple $(49,16,3,6)$ furnishes such an instance. It satisfies all four canonical conditions, yet was shown not to correspond to any SRG by Bussemaker, Haemers, Mathon and Wilbrink \cite{bussemaker_haemers_mathon_wilbrink_1989}. A more recent illustration further afield is $(85,14,3,2)$, whose non-existence was established only in 2025 through a detailed enumeration of admissible local structures \cite{shpectorov_zhao_2025}.

A useful but more specialized algebraic obstruction, applicable specifically to conference parameters \eqref{eq:conference-params}, is the Bruck--Ryser--Chowla theorem \cite{brouwer_haemers_spectra}. It requires the number of vertices $N$ to be expressible as a sum of two integer squares, $N = a^2 + b^2$. The conference quadruple $(21,10,4,5)$, for instance, satisfies all four canonical conditions of Sec.~\ref{app:param-eq}--\ref{app:absolute-bound}, but $21 = 3 \cdot 7$ contains the prime $3 \equiv 3 \pmod 4$ with odd multiplicity, so $21$ is not a sum of two squares; therefore no SRG with these parameters exists. The same reasoning rules out the conference quadruple $(33,16,7,8)$.

For the purposes of the present work, what matters is that already within a comparatively small range of $N$, the canonical algebraic conditions reduce the universe of candidate quadruples to a small finite set, and that the existence status of each surviving case must be retrieved from the SRG literature rather than derived by purely algebraic means.

\subsection{\texorpdfstring{Feasible parameter sets in the regime $N \le 49$}{Feasible parameter sets in the regime N <= 49}}
\label{app:table}

Applying the four canonical conditions of Sec.~\ref{app:param-eq}--\ref{app:absolute-bound}, together with the redundancy-removing restriction $k \le \lfloor (N-1)/2 \rfloor$, justified because the complement of an SRG with parameters $(N,k,\lambda,\mu)$ is itself an SRG with parameters $(N,\,N-k-1,\,N-2-2k+\mu,\,N-2k+\lambda)$ and quadruples with $k > (N-1)/2$ are therefore not independent, yields the $33$ quadruples listed in Table~\ref{tab:feasible-srgs}. Of these, $30$ correspond to one or more existing SRGs, marked with \textsf{Y} in the last column; the remaining three, $(21,10,4,5)$, $(33,16,7,8)$ and $(49,16,3,6)$, are algebraically feasible but do not correspond to any SRG, for the reasons indicated in Sec.~\ref{app:not-sufficient}. The example identifications and existence status reported in Table~\ref{tab:feasible-srgs} are taken from the SRG literature \cite{brouwer_haemers_spectra,godsil_royle_algebraic,brouwer_online_table,bussemaker_haemers_mathon_wilbrink_1989} and are not contributions of the present work. They are reproduced here only to make explicit which parameter sets constitute meaningful targets for the QUBO formulations of Sec.~\ref{sec:full-qubo}. The list also illustrates that the existence of more than one non-isomorphic SRG with the same parameters is in general the rule rather than the exception, as the parameter set $(36,15,6,6)$ shows, admitting $32{,}548$ non-isomorphic realizations. The QUBO formulations of Sec.~\ref{sec:full-qubo} do not distinguish among such realizations; their set of global minimizers comprises one configuration for every distinct SRG with the target parameters, up to vertex relabeling.

\begin{table*}[!htbp]
\caption{\justifying Algebraically feasible SRG parameter sets in the range $N \le 49$ with $k \le \lfloor (N-1)/2 \rfloor$. All listed quadruples satisfy the parameter equation \eqref{eq:param-eq-app}, the spectral integrality conditions of Sec.~\ref{app:spectral}, the Krein conditions \eqref{eq:krein1}--\eqref{eq:krein2}, and the absolute bound \eqref{eq:absolute-bound}. The column Exists reports \textsf{Y} if at least one SRG with the given parameters is known to exist, and \textsf{N} if non-existence has been proven. Type~I refers to integer eigenvalues, Type~II to strictly irrational eigenvalues; the conference parameters \eqref{eq:conference-params} with $N$ a perfect square, namely $(9,4,1,2)$, $(25,12,5,6)$ and $(49,24,11,12)$, yield integer eigenvalues and are therefore classified as Type~I. Identifications of existing cases are drawn from the SRG literature and are not derived in this work.}
\label{tab:feasible-srgs}
\centering
\begin{tabular}{cccccc}
\hline
$(N,k,\lambda,\mu)$ & Type & $(\theta_2,\theta_3)$ & $(\nu_2,\nu_3)$ & Examples & Exists \\
\hline
$(5,2,0,1)$       & II & $\frac{-1\pm\sqrt{5}}{2}$    & $(2,2)$    & pentagon $C_5$                                & \textsf{Y} \\
$(9,4,1,2)$       & I  & $(1,-2)$                     & $(4,4)$    & $L_2(3)$, Paley$(9)$                          & \textsf{Y} \\
$(10,3,0,1)$      & I  & $(1,-2)$                     & $(5,4)$    & Petersen graph                                & \textsf{Y} \\
$(13,6,2,3)$      & II & $\frac{-1\pm\sqrt{13}}{2}$   & $(6,6)$    & Paley$(13)$                                   & \textsf{Y} \\
$(15,6,1,3)$      & I  & $(1,-3)$                     & $(9,5)$    & $\mathrm{GQ}(2,2)$, complement of $T(6)$      & \textsf{Y} \\
$(16,5,0,2)$      & I  & $(1,-3)$                     & $(10,5)$   & Clebsch graph                                 & \textsf{Y} \\
$(16,6,2,2)$      & I  & $(2,-2)$                     & $(6,9)$    & $L_2(4)$, Shrikhande graph                    & \textsf{Y} \\
$(17,8,3,4)$      & II & $\frac{-1\pm\sqrt{17}}{2}$   & $(8,8)$    & Paley$(17)$                                   & \textsf{Y} \\
$(21,10,3,6)$     & I  & $(1,-4)$                     & $(14,6)$   & complement of $T(7)$                          & \textsf{Y} \\
$(21,10,4,5)$     & II & $\frac{-1\pm\sqrt{21}}{2}$   & $(10,10)$  & excluded by Bruck--Ryser--Chowla              & \textsf{N} \\
$(21,10,5,4)$     & I  & $(3,-2)$                     & $(6,14)$   & triangular graph $T(7)$                       & \textsf{Y} \\
$(25,8,3,2)$      & I  & $(3,-2)$                     & $(8,16)$   & rook graph $L_2(5)$, unique                   & \textsf{Y} \\
$(25,12,5,6)$     & I  & $(2,-3)$                     & $(12,12)$  & Paley$(25)$ and others                        & \textsf{Y} \\
$(26,10,3,4)$     & I  & $(2,-3)$                     & $(13,12)$  & ten Paulus graphs                             & \textsf{Y} \\
$(27,10,1,5)$     & I  & $(1,-5)$                     & $(20,6)$   & complement of the Schl\"afli graph            & \textsf{Y} \\
$(28,12,6,4)$     & I  & $(4,-2)$                     & $(7,20)$   & $T(8)$, three Chang graphs                    & \textsf{Y} \\
$(29,14,6,7)$     & II & $\frac{-1\pm\sqrt{29}}{2}$   & $(14,14)$  & Paley$(29)$ and others                        & \textsf{Y} \\
$(33,16,7,8)$     & II & $\frac{-1\pm\sqrt{33}}{2}$   & $(16,16)$  & excluded by Bruck--Ryser--Chowla              & \textsf{N} \\
$(35,16,6,8)$     & I  & $(2,-4)$                     & $(20,14)$  & complements of $\mathrm{STS}(15)$ block graphs & \textsf{Y} \\
$(36,10,4,2)$     & I  & $(4,-2)$                     & $(10,25)$  & rook graph $L_2(6)$                           & \textsf{Y} \\
$(36,14,4,6)$     & I  & $(2,-4)$                     & $(21,14)$  & $180$ non-isomorphic SRGs                     & \textsf{Y} \\
$(36,14,7,4)$     & I  & $(5,-2)$                     & $(8,27)$   & triangular graph $T(9)$                       & \textsf{Y} \\
$(36,15,6,6)$     & I  & $(3,-3)$                     & $(15,20)$  & $32{,}548$ non-isomorphic SRGs                & \textsf{Y} \\
$(37,18,8,9)$     & II & $\frac{-1\pm\sqrt{37}}{2}$   & $(18,18)$  & Paley$(37)$ and others                        & \textsf{Y} \\
$(40,12,2,4)$     & I  & $(2,-4)$                     & $(24,15)$  & $\mathrm{GQ}(3,3)$ and others                 & \textsf{Y} \\
$(41,20,9,10)$    & II & $\frac{-1\pm\sqrt{41}}{2}$   & $(20,20)$  & Paley$(41)$ and others                        & \textsf{Y} \\
$(45,12,3,3)$     & I  & $(3,-3)$                     & $(20,24)$  & $78$ non-isomorphic SRGs                      & \textsf{Y} \\
$(45,16,8,4)$     & I  & $(6,-2)$                     & $(9,35)$   & triangular graph $T(10)$                      & \textsf{Y} \\
$(45,22,10,11)$   & II & $\frac{-1\pm\sqrt{45}}{2}$   & $(22,22)$  & Mathon's pseudocyclic SRG; see \cite{brouwer_online_table} & \textsf{Y} \\
$(49,12,5,2)$     & I  & $(5,-2)$                     & $(12,36)$  & rook graph $L_2(7)$                           & \textsf{Y} \\
$(49,16,3,6)$     & I  & $(2,-5)$                     & $(32,16)$  & non-existence proven in \cite{bussemaker_haemers_mathon_wilbrink_1989} & \textsf{N} \\
$(49,18,7,6)$     & I  & $(4,-3)$                     & $(18,30)$  & $L_3(7)$ and others; see \cite{brouwer_online_table} & \textsf{Y} \\
$(49,24,11,12)$   & I  & $(3,-4)$                     & $(24,24)$  & Paley$(49)$ and others                        & \textsf{Y} \\
\hline
\end{tabular}
\end{table*}

\FloatBarrier

\bibliography{references.bib}
\end{document}